\newtheorem{theorem}{Theorem}
\newtheorem{prop}[theorem]{Proposition}
\newtheorem{corollary}[theorem]{Corollary}
\newtheorem{remark}[theorem]{Remark}
\newenvironment{notation}
{\par\noindent\textbf{Notations:}\quad}  
  {\par}   
\newtheorem{assumption}{Assumption}
\newtheorem{lemma}[theorem]{Lemma}
\providecommand{\keywords}[1]
{
  \small	
  \textbf{\textit{Keywords---}} #1
}
\providecommand{\classification}[1]
{
  \small	
  \textbf{{AMS subject classifications--}} #1
}
\title{Linearized Localized Orthogonal Decomposition for Quasilinear Nonmonotone Elliptic PDE}\author{Maher Khrais\footnotemark[1] \and
 Barbara Verfürth\footnotemark[1]}  
\date{}
\begin{document}

\maketitle
\renewcommand{\thefootnote}{\fnsymbol{footnote}}
\footnotetext[1]{Institut für Numerische Simulation, Universität Bonn, Friedrich-Hirzebruch-Allee 7, D-53115 Bonn, Germany}
\renewcommand{\thefootnote}{\arabic{footnote}}

\begin{abstract}
     In this paper, we propose and analyze a multiscale method for a class of quasilinear elliptic problems of nonmonotone type with spatially multiscale coefficient. The numerical approach is inspired by the Localized Orthogonal Decomposition (LOD), so that we do not require structural assumptions such as periodicity or scale separation and only need minimal regularity assumptions on the coefficient.
     To construct the multiscale space, we solve linear fine-scale problems on small local subdomains, for which we consider two different linearization techniques. For both, we present a rigorous well-posedness analysis and convergence estimates in the $H^1$-semi norm. We compare and discuss theoretically and numerically the performance of our strategies for different linearization points. Both numerical experiments and  theoretical analysis demonstrate the validity and applicability of the method. 

\end{abstract}

\keywords{nonmonotone quasilinear problem; multiscale method; a priori error estimate; LOD.} \\

\classification{65N30, 65N15, 35J60, 65J15, 65N12 }

\section{Introduction}\label{SectionIntro}
Many applications lead to partial differential equations with \emph{nonlinearities} and spatially \emph{multiscale} coefficients. This includes stationary Richards equation in the context of groundwater flow~\cite{RicharEq} or heat conductivity of modern composite materials~\cite{heatConduction}. Nonlinear models are considered to accurately model the physical responses of these materials, in particular at high temperatures, intensities or forces, where standard linear approximations are no longer valid. In addition, spatial multiscale coefficients model the heterogeneity of the media. This occurs in engineered composites as well as in nature. For instance, the conductivity in soil formations may change drastically over small distances.

In this work, we focus on a class of nonmonotone quasilinear elliptic PDEs of the form
\begin{equation}\label{NonLinear}
       \begin{split}
           -\nabla\cdot{(\alpha(x,u)\nabla{u})}&=f \quad\text{in} \;\Omega 
        \end{split}
\end{equation}
in an open bounded domain $\Omega \subset \mathbb{R}^d$, where $d\leq 3$. Boundary conditions and detailed assumptions on $\alpha$ will be made precise below.  
In general, the standard numerical approaches fail to describe in a satisfactory way the macro-scale behavior of the solution in the presence of high heterogeneity. In such approaches, the fine spatial features characterized by the parameter $\varepsilon$ are difficult to resolve in the nonasymptotic regime, i.e., when the mesh size $H$ satisfies $H> \varepsilon$.  To recover the optimal convergence rate w.r.t. $H^1$-norm, $H$ must satisfies $H\ll\varepsilon$. However, such a numerical simulation would be prohibitively expensive. 

 The macroscopic behavior of the solution can be rigorously described by mathematical homogenization theory. In the limit $\varepsilon \xrightarrow{}0$, one may replace the original multiscale problem with the homogenized problem, whose coefficients are only slowly varying with respect to space. Mathematical homogenization of~\eqref{NonLinear} has been studied in several papers, for example~\cite{MathematicalHomogenizaiton}. 
Generally speaking, numerical homogenization methods are mainly based on a macroscopic solver where the missing macroscopic data (similar to the homogenized tensor) or suitable non-polynomial multiscale basis functions are obtained through microscopic solvers on local subdomains. 
Several numerical homogenization techniques have been proposed in the past few decades, especially for linear elliptic problems, see~\cite{HMM, GMSFEM, MsFEMLINEAR, LOD-Linear}, but the literature for nonlinear problems, in particular of nonmonotone type~\eqref{NonLinear}, is much more sparse.

The heterogeneous finite element method has been formulated and analyzed for the nonmonotone nonlinear elliptic problem~\eqref{NonLinear} in~\cite{HMM_quasiliner, HMS-ofNonmonotne}. The numerical approach based on the multiscale finite element method MsFEM is studied for~\eqref{NonLinear} with monotonicity assumption imposed on the coefficient in order to derive a convergence rate in~\cite{MSFEM}. The generalized multiscale FEM (GMsFEM) technique is implemented and analyzed to solve nonlinear elliptic problems in~\cite{NonlinearGMS}. In the case of non-stationary Richard equations, the linearized system is tackled by utilizing constraint energy minimizing GMsFEM (CEM-GMsFEM) to obtain proper offline multiscale basis functions, see~\cite{CEM-GMsFEM}. Relying on the partition of unity method to devise  global approximation spaces from a local reduced space, a component-based parametric model order reduction (CB-pMOR) technique is proposed for the parametrized nonmonotone version of problem~\eqref{NonLinear} in~\cite{CB_pMOR}.

In this work, we consider a numerical homogenization approach inspired by the Localized Orthogonal Decomposition method (LOD), which has already successfully been applied to a range of (linear) problems, see~\cite{NumericalHomogenizationActaNumerica,LOD-Linear} for an overview. The goal of the LOD method is to incorporate the (spatial) fine-scale behavior of the coefficient $\alpha$ into the basis of the coarse FE space resulting in a new modified low-dimensional function space with good approximation properties. The fine-scale incorporation into the coarse FE space is obtained by using correction operators. This operator is computed in a localized fashion on small patches of coarse mesh elements. However, these arguments cannot easily be transferred to the nonlinear case. For semilinear equations, the nonlinearity can be neglected in the construction of the correction operators because it is in a lower order term, see~\cite{semilinear}. For quasilinear problems of monotone type, a multiscale method was proposed and analyzed in~\cite{Barbara} which uses a linearization of the problem to compute the correction operators. The idea of linearization was later combined with an update strategy in~\cite{MAir&barbara} for nonlinear Helmholtz equations. Such an iterative numerical homogenization method, but based on generalized polyharmonic splines, is studied in~\cite{IterativeLOD} for a certain class of quasilinear monotone PDE. 

In this work, we consider the original idea from~\cite{Barbara} and once construct and compute a multiscale approximation space in a linearized fashion. This space is employed for the nonlinear multiscale problem with a Galerkin approach, which in practice of course uses an iterative method. We focus on the construction of the multiscale space and, in particular, on the linearization techniques involved for our nonmonotone PDE~\eqref{NonLinear}.
This nonmonotone nature requires several changes in the design of the method and, in particular, its error analysis. Our main contribution is thus to extend the ideas from~\cite{Barbara} to~\eqref{NonLinear}, which is a non-trivial task.
Our a priori error estimate is of optimal order with respect to the mesh size without any dependence on the spatial variations of $\alpha$ or (higher) regularity of the exact solution. Essentially, the error splits into a discretization error similar to the linear case and a linearization error, which we then further analyze. We further discuss the choice of the linearization point for the correction problems.

The paper is structured as follows. In the rest of Section~\ref{SectionIntro}, we introduce the model problem and summarize some essential findings on the existence and uniqueness of the solution and its finite element approximation. In Section~\ref{SectionMultiscaleMethod}, we introduce our multiscale approach and the linearization techniques that are needed for the correction computation.  We  provide a detailed presentation of several numerical experiments  in Section~\ref{SectionExperiment} in which we apply our multiscale techniques with two linearization methods. These experiments aim to demonstrate the effectiveness and applicability of the methods developed in the previous section.  Section~\ref{SectionError} is dedicated to a detailed and rigorous error analysis of our numerical method. The error analysis interprets the practical observations within a theoretical framework, and discusses the impact of the linearization points noted in the preceding section. We also discuss the linearization error and its impact on the error estimate.\\
\begin{notation}
    Throughout the article, we use standard notation on Lebesgue and Sobolev spaces and their norms. For a given subdomain $\Omega_1 \subseteq \Omega$, let $|\cdot|_{1,\Omega_1}$, $\|\cdot \|_{1,\Omega_1}$, $\|\cdot\|_{\infty}$, and $\|\cdot\|_{0,\Omega_1}$ denote the $H^1(\Omega_1)$-semi-norm, the standard $H^1(\Omega_1)$-norm, the $L^\infty$-norm, and the standard $L^2(\Omega_1)$-norm, respectively. The scalar product $(\cdot, \cdot)_{\Omega_1}$ is the $L^2$ inner product on the subdomain $\Omega_1$. We will omit the subscript $\Omega_1$ if it is the full domain, i.e., $\Omega_1=\Omega$. We write $a \lesssim b$ if there is a generic constant $C$ (independent from the discretization and multiscale parameters) such that $a\leq C b$. We also denote the derivative of the coefficient $\alpha$ w.r.t. the second argument by $\alpha_s$. 
 
\end{notation}

\subsection{Problem formulation}

In this paper, we consider the following quasilinear nonmonotone elliptic problem 
\begin{equation}\label{PDEPronlem}
       \begin{split}
           -\nabla\cdot{(\alpha(x,u)\nabla{u})}&=f \;\;\text{in}\;\;\Omega,  \\ u&=0 \;\; \text{on} \;\; \partial \Omega.
        \end{split}
\end{equation}
We assume homogeneous Dirichlet boundary conditions for simplicity. Other types of boundary conditions can be treated in a very similar manner. The matrix-valued coefficient $\alpha(x,u) \in \mathbb{R}^{d\cross d}$ encodes the properties of the material, in particular, we implicitly assume that it shows rapid oscillations on fine spatial scales. Therefore, we do not assume more than $L^\infty$-regularity of $\alpha$ w.r.t.~$x$, but some additional assumptions w.r.t.~the nonlinearity are required to guarantee the well-posedness of problem~\eqref{PDEPronlem}.

\begin{assumption}\label{assum_1} Suppose that

\begin{itemize}
    \item $\alpha(x,s)$ is uniformly Lipschitz continuous w.r.t.~the second argument, i.e., there exists  $\Lambda_0 >0$ such that 
    \begin{equation}
       \begin{split}
       \abs{\alpha(x, s_1)-\alpha(x,s_2)}\leq \Lambda_0 \abs{s_1-s_2} ,   \;  \;\forall x \in \Omega, \; \; s_1, s_2 \in \mathbb{R}.
       \end{split}
    \end{equation}
  
    \item $\alpha(x,s)$ satisfies the uniform ellipticity and boundedness conditions, i.e., there exist $\lambda >0$, and $\Lambda_1>0$ such that
    \begin{equation}\label{ellip}
                  {\alpha(x, s)\psi\cdot\psi} \geq \lambda |\abs{\psi}|^2, \quad \text{and} \quad \|\alpha(x,s)\psi\|\leq \Lambda _1 |\abs{\psi}| \;\; \forall  x \in \Omega , \;s \in \mathbb{R}, \;  \psi \in \mathbb{R}^d.
    \end{equation}
 
   \item For Fréchet linearization error analysis below, we impose an additional assumption that $\alpha(x,s)$ is twice differentiable w.r.t. the second argument. This assumption is not necessary to formulate our method.
\end{itemize}
\end{assumption}
In addition, we assume that $f \in L^2(\Omega)$. Now, consider the weak formulation of finding $u \in H_0^1(\Omega)$ that solves
\begin{equation}\label{WeakForm}
    \mathcal{A}(u;u,v) :=(\alpha(x,u)\nabla u,\nabla v) =(f,v)=:F(v) ,  \;\;\; \forall  v  \in H^1_0(\Omega).
\end{equation}
For any $q \in H_0^1(\Omega)$, the bilinear form $\mathcal{A}(q; \cdot, \cdot)$  inherits the ellipticity and boundedness conditions in $H^1_0(\Omega)$, i.e., there exists $\lambda>0 \;\text{and} \; \Lambda_1>0$ such that
\begin{align*}
    \lambda |w|_1^2 &\leq \mathcal{A}(q;w,w),  \; \; \forall q,w \in H^1_0(\Omega), \\
    \mathcal{A}(q;v,w) &\leq \Lambda_1 |v|_1|w|_1 , \; \; \forall q,v,w \in H^1_0(\Omega).
\end{align*}
Assumption \ref{assum_1}  ensures the existence and the uniqueness of the solution $u \in H^1_0(\Omega)$ of the weak problem~\eqref{WeakForm}.
The weak solution $u$ satisfies 
\begin{equation}\label{UnqueSolution}
    |u|_1 \leq C \|f\|_0,
\end{equation} 
where $C$ depends on the constants of~\eqref{ellip} in Assumption ~\ref{assum_1}, but is independent of the spatial variations of $\alpha$~\cite{chipot}. We refer to \cite{chipot, Douglas,Unique} for  proofs of existence and uniqueness of the weak solution. By a compactness argument, the authors first introduced  in \cite{Douglas} the existence of the weak solution $u$ of the formulation ~\eqref{WeakForm}. The authors in \cite{Unique} proved the existence of the weak solution $u$ as a weak limit of the Galerkin approximations $\{u_h\}_{h \xrightarrow{}0}$. In addition, the uniqueness is proven using the comparison principle. 
\subsection{Finite element method}

Consider a decomposition of the domain of interest $\Omega$ into a partition $\mathcal{T}_H$ of simplices or quadrilaterals. Let $K$ denote the elements of $\mathcal{T}_H$ and the corresponding diameter $H_K$, and define   \[H:=\max_{K \in \mathcal{T}_H}H_K.\] 
We assume that $\mathcal{T}_H$ is shape-regular and quasi-uniform. 

We emphasize that $\mathcal{T}_H$ is coarse in the sense that it does not recover the spatial variations of $\alpha$. Let $V_H$ be the standard lowest-order conforming finite element subspace of $H^1_0(\Omega)$. It consists of piecewise polynomials of total degree at most 1, if $K$ is a simplex,  or of partial degree at most 1 in each variable, if $K$ is quadrilateral. 
The finite element method seeks to find the solution $u_H \in V_H$ that satisfies
\begin{equation}\label{FEM_problem}
    \mathcal{A}(u_H;u_H,v_H)=(f,v_H) \;\; \forall v_H \in V_H.
\end{equation}

In practice, the arising nonlinear system is solved using an appropriate iterative method, e.g., the Newton method. The heterogeneity of $\alpha(x,u) \in \mathbb{R}^{d\cross d}$ w.r.t. $x$ needs to be considered when solving~\eqref{WeakForm} numerically. In order to have an accurate approximation solution that recovers the main properties of the solution $u$, the triangulation $\mathcal{T}_H$ should capture all the features of the medium. In practice the mesh size $H$ needs to be small enough. This assumption is crucial to guarantee the desired convergence order. However, that would lead to a computationally expensive method.

 In \cite{FEMquad}, the following  error estimate is derived  
 \[|u-u_H|_1 \leq C H^l,\]
given that $u \in H^{l+1}(\Omega)$ and $u_H$ is the FEM solution obtained with numerical integration. Under certain additional regularity assumptions, the convergence w.r.t. $L^2$ is shown to be \[\|u-u_H\|_0 \leq C_2 H^{l+1},\] where the constant $C_2$ is independent of the mesh size $H$.
It is for our case $l=1$, we need at least $u \in H^2(\Omega)$ for the above estimate to be hold. However, this is not always guaranteed as the coefficient $\alpha$ may be discontinuous w.r.t. $x$. Moreover, even if $u \in H^2(\Omega)$, the constants $C \text{ and } C_2$ depend on the spatial variations of $\alpha$, which is not desirable for multiscale coefficient.

 The existence of a solution to \eqref{FEM_problem} is guaranteed by the virtue of Brouwer fixed-point theorem. However, the uniqueness of the approximate solution is not always ensured, even though the weak problem itself admits a unique solution in $H^1_0(\Omega)$~\cite{uniqandnotunique}.   More precisely, the  uniqueness of  solution $u_{H} \in V_{H}$ to ~\eqref{FEM_problem} is only guaranteed for sufficiently fine meshes, i.e., there is $H_0$ such that the solution is unique for $H\leq H_0$. Under regularity  assumption of $u$ and sufficiently small  $H$, the uniqueness of the approximate solution with numerical quadrature is addressed in~\cite{FEMquad} for general order $l\geq 1$.  We refer to~\cite{FEMquad, uniqandnotunique, DouglasGalerkin} and the reference therein for further arguments and details. Here, we will employ the LOD method to achieve optimal convergence rate  without requiring more than $u\in H^1_0(\Omega)$.

\section{Multiscale method}\label{SectionMultiscaleMethod}
In this section, we introduce our multiscale method using basis functions which are computed from localized and linearized versions of problem~\eqref{WeakForm}. This is largely inspired by the multiscale method for quasilinear monotone problems in~\cite{Barbara}, which in turn is inspired by the Localized Orthogonal Decomposition (LOD)~\cite{LOD-Linear}. The key aim is to identify a low-dimensional subspace of $H^1_0(\Omega)$ with good approximation properties for the multiscale problem. This is done by a decomposition of $H_0^1(\Omega)$ into coarse and fine orthogonal subspaces. In the nonlinear case, we still have to identify the appropriate form of orthogonality, see below. Next, we discuss the key aspects of the LOD approach. We describe the orthogonal decomposition of the space $H_0^1(\Omega)$ and the steps involved. In addition, we outline the linearization procedures used to address the nonlinearity when solving the correction problems. That is required to construct the orthogonal multiscale space linearly. Finally, we explain the localization strategies employed to construct the space of interest locally. 
\subsection{(Orthogonal) Decompositions}
The first attempt to decompose $H_0^1(\Omega)$ into   subspaces depends mainly on using a suitable interpolation operator that satisfies certain properties. More precisely, let \[I_H: H^1_0(\Omega)\xrightarrow{}V_H\] be a bounded local linear projective operator, i.e., $I_H(v_H)=v_H \; \forall v_H \in V_H.$ Additionally, $I_H$ satisfies the following stability and approximation properties 
\begin{equation}\label{Interpolation_properties}
    |I_Hv|_{1,K}   \lesssim |v|_{1,N(K)}, \qquad
    \|v-I_Hv\|_{0,K}  \lesssim H|v|_{1,N(K)}, 
\end{equation}
 for all $v \in  H_0^1(\Omega)$. The subdomain $N(K)$ is the union of elements that have  nonempty intersection with the element $K$. 
 An example of projective interpolation that is commonly used in the LOD literature is defined as follows 
\[I_H: H_0^1(\Omega)\xrightarrow{\Pi_H}S^1(\mathcal{T}_H)\xrightarrow{E_H}V_H.\]
The operator $\Pi_H$ is defined to be the $L^2$ projection onto the elementwise affine functions $S^1( \mathcal{T}_H )$, and $E_H$ is an averaging operator. This choice is not restrictive, one can use any projective interpolation that satisfies the stability and approximation properties in~\eqref{Interpolation_properties}. Practically, we only require the kernel of the projection for the implementation of LOD. Given $I_H$ that satisfies the above properties  in~\eqref{Interpolation_properties}, the space $W=\operatorname{ker}I_H$ contains all the so-called fine-scale functions that cannot be captured in the finite element approximation space $V_H$. Due to the projection property of the interpolation $I_H$, the space $H_0^1$ can be uniquely decomposed as follows \[H_0^1=V_H\oplus  W.\] 
This decomposition does not yet depend on the problem formulation. For the linear case, the idea of the LOD is to consider another decomposition \[H_0^1=V_H^\mathrm{ms}\oplus  W\] that is orthogonal w.r.t. the energy scalar product of the problem.
If one tries to transfer the idea to the present nonlinear case, one might try to define the space $V_H^\mathrm{nl,ms}$ via
    \begin{equation}\label{Orthogonality}
        \mathcal{A}(v_H^\mathrm{nl,ms};v_H^\mathrm{nl,ms},w)=0 , \; \;  \forall v_H^\mathrm{nl,ms} \in V_H^\mathrm{nl,ms},  \;   \; \forall w \in W.  
    \end{equation}
However, this is a nonlinear ``space'' because of the nonlinearity of $\mathcal A$. This can, in particular, be seen when one tries to write it in the usual form $V_H^\mathrm{nl,ms} =(\operatorname{id}-Q^\mathrm{nl})V_H$, where $Q^\mathrm{nl}: V_H \xrightarrow{}W$ is a nonlinear so-called correction operator. The nonlinearity of $Q^\mathrm{nl}$ as well as $V_H^\mathrm{nl,ms}$ complicates the multiscale method due to the coupling of the nonlinear problem~\eqref{Orthogonality} to construct the space and the resulting nonlinear problem from the Galerkin approach. In particular, it is not straightforwardly clear that such a multiscale method is well-defined. Instead, we will discuss a linearized construction of multiscale basis functions in the following.

\subsection{Linearizations}\label{linearizaion}

Here, we introduce two possible linearization methods for $\mathcal A$ that we will use to formulate linearized corrector problems in the next subsection. The linearization approximation of the nonlinearity of $\alpha(x,v)\nabla v$ is of the following general abstract form \[ \alpha(x,v)\nabla v \approx \mathbf{A}_L(x,p^*,\nabla v)= \alpha_L(x,p^*)\nabla v + v\beta(x,p^*)+C_L(x,p^*).\] 
Note that the above approximation is linear with respect to $\nabla v$ and the function $p^* \in H^1_0(\Omega)$ is an a priori chosen, fixed function that we call the linearization point. Two possible techniques that we consider in this article are
\begin{itemize}
    \item  \textbf{Kačanov-type} linearization, which ``freezes'' the nonlinearity  in  $\alpha$ at the linearization point $p^*$, i.e.,
    \[\mathbf{A}_L(x,p^*,\nabla v)= \alpha(x,p^*)\nabla v.\] 
    
    This means that $\mathcal{\alpha}_L(x,p^*) =\alpha(x,p^*)$, $\beta(x,p^*)=0,$ and $C_L(x,p^*)=0$ in the abstract form.
  \item \textbf{Fréchet derivative} linearization, which is inspired by the Newton method. For general $v \in H^1_0(\Omega)$, the Fréchet derivative of the nonlinear coefficient $\alpha$ at the linearization point $p^*$ in the direction of $v-p^*$ reads
     
\[ \mathcal{F}(p^*)(v-p^*)=\alpha(x,p^*)\nabla (v-p^*) + (v-p^*)\alpha_s(x, p^*)\nabla p^*.\]
 
To simplify the presentation and the computation of the correctors,  the constant $\alpha(x,p^*)\nabla p^*$ can be omitted   in   (\ref{remove_the_constant}), so we introduce the following linearization formula 
\begin{align*}
   \mathbf{A}_L(x,p^*,\nabla v)&=\alpha(x,p^*)\nabla v+ (v-p^*)\alpha_s(x, p^*)\nabla p^*.
\end{align*}
In abstract form, the coefficients correspond as follows: $\mathcal{\alpha}_L(x,p^*) =\alpha(x,p^*)$, $\beta(x,p^*)=\alpha_s(x, p^*)\nabla p^*$, and $C_L(x,p^*)=0$. 

\end{itemize}

\subsection{Linearized correction operators}
We proceed by applying the  linearization strategies  presented in the previous section. 
 Based on the linear approximations of  $\alpha$ at $p^*$, we introduce the following bilinear  
   form  
    \[\mathcal{A}_L(p^*,v_1,v_2)=(\mathbf{A}_L(x,p^*,\nabla v_1),\nabla v_2)_\Omega.\]
 We define the linearized corrector operator $Q: V_H\xrightarrow{}W$ such that it satisfies the following orthogonality 
\begin{equation}\label{LinearOrthogonality}
    \mathcal{A}_L( p^*, v_H-Qv_H,w)=0, \; \;\forall v_H \in V_H \;\text{and }\forall w \in W.
\end{equation}
Equivalently, equation (\ref{LinearOrthogonality}) can be expressed as 

\begin{equation}\label{remove_the_constant}
    \int_{\Omega} \mathbf{A}_L(x,p^*,\nabla v_H)\cdot \nabla w dx =\int_{\Omega} \mathbf{A}_L(x,p^*,\nabla Qv_H)\cdot\nabla w dx .
\end{equation}
The linearization $\mathbf{A}_L(x,p^*,\nabla v_1)$  incorporates our two linear models, the Kačanov-type and Fr\'echet derivative linearization.
Given that    $\beta(x,p^*)=0  \text{ and } C_L(x,p^*)=0$, Assumption~\ref{assum_1} directly yields the well-posedness of the corrector problem (\ref{remove_the_constant}) for the Kačanov-type linearization. The well-posedness in the case of Fréchet derivative will be addressed and analyzed in Subsection~\ref{weposdnesssubsection}.

The ideal multiscale method is now a Galerkin method based on $V_H^\mathrm{ms}=(\operatorname{id}-Q)V_H$, which is now a linear space, such that the method is indeed well-defined.
We still call this method ideal because the correction operator $Q$ has global support for $d>1$. In the linear case, the authors of ~\cite{LOD-Linear} proved that the correction $Q$ decays exponentially away from the element of interest, we refer to Proposition~\ref{my_proposition}. Note that these results can also be applied to our \emph{linearized} corrector problems. Hence, we localize the corrector $Q$ by truncating the domain of computation as detailed in the next subsection. 


 \subsection{Localization and practical method}
In this subsection we introduce the localization of the global correction problem~\eqref{LinearOrthogonality}. We first present some additional notations. 
 Let $N^k(T)$ denote the $k$-layer patch of neighboring elements of the element $T$. We inductively define it  as follows
\begin{align*}
N^0(T)=T, \quad 
N^k(T)=\bigcup_{\substack{K \in 
\mathcal{T}_H \\ \overline{K} \cap \overline{N^{k-1}(T)} \neq \emptyset} }K.
\end{align*}
The parameter $k$ represents the degree of localization of the corrector because it determines the size of the patch. The quasi-uniformity assumption of $\mathcal{T}_H$ ensures that the number of elements that belong to $k$-layer patches is bounded by a constant $C_\mathrm{ol}$ that only depends on $k$ in a polynomial manner, i.e., 
\begin{equation}\label{Uniform_regu}
\max_{T \in \mathcal{T}_H}|\{K \in \mathcal{T}_H:K \in N^k(T)\}| \leq C_\mathrm{ol}.
\end{equation}
Now, we define the localized correction operator as \[Q^k:V_H \xrightarrow{} W , \;\;\; Q^k=\sum_{T \in \mathcal{T}_H } Q_{T,k},\] where 
\[ Q_{T,k}:V_H \xrightarrow{}W(N^{k}(T)):=\{ w \in W : \text{supp}(w)  \subseteq\overline{N^k(T)}\}.\]
The local correction problem reads as follows: For $v_H \in V_H$,  find the correction $Q_{T,k}v_H \in W(N^k(T))$  that satisfies
\begin{equation}\label{Corr}
    (\mathcal{A}_L)_{N^k(T)}(p^*,Q_{T,k}v_H, w)=(\mathcal{A}_L)_{T}(p^*,v_H, w) , \;\forall w \in W(N^{k}(T)).
\end{equation}
Here, the bilinear forms on the left and right sides are restricted to the subdomains ${N^k(T)} \; \text{and} \; T$, respectively. 

The corrector problems~\eqref{Corr} are still posed on infinite dimensional subspaces $W(N^{k}(T))$, which need to be discretized. This is accomplished as usual by introducing a fine-scale mesh $\mathcal{T}_h$ of $\Omega$ with elements $K$ of diameter $h_K$. Let $h:=\max_{K \in \mathcal{T}_h}{h_K}$ such that $h\ll H$ resolves all features of $\alpha$. Let $V_h$ be the lowest order Lagrange finite element space associated with $\mathcal{T}_h$. Consider the following fine-scale space\[W_h(N^k(T)):= \{ w_h \in V_h\cap W : \operatorname{supp}(w_h) \subseteq \overline{N^k(T)}\}.\]
    In practice, the corrector problem~\eqref{Corr} is solved with the new fine-scale space $W_h(N^k(T))$.

To construct our multiscale space, we replace $Q$ by $Q^k$ and define \[V_{H,k}=(\operatorname{id}-Q^k)V_H=\{v_H-Q^kv_H: v_H \in V_H\}.\] Note that $V_{H,k}$ is constructed by correcting the basis of the space $V_H$. Finally, we formulate our Galerkin method on $V_{H,k}$ as follows: Find $u_{H,k} \in V_{H,k}$ that satisfies
\begin{equation}\label{local_probelm }
    \mathcal{A}(u_{H,k};u_{H,k},v_{H,k})=(f,v_{H,k}), \; \forall \;v_{H,k} \in V_{H,k}.
\end{equation}
Observe that we solve the nonlinear problem on the low-dimensional space $V_{H,k}$ of the same dimension as $V_H$. The localization procedure above affects only the space construction, but not the final problem. In practice, then again one uses an iterative method to solve the associated nonlinear system.

One can also consider a Petrov-Galerkin formulation of the LOD to reduce the communication between the correctors. We seek   the solution  $u^{PG}_{H,k} \in V_{H,k}$ such that \[\mathcal{A}(u^{PG}_{H,k} ;u^{PG}_{H,k} ,v_{H})=(f,v_{H}),  \; \forall v_{H} \in V_{H}.\]

The   solution $u_{H,k} \in V_{H,k}$ of ~\eqref{local_probelm }  is ensured to exists as in \cite{uniqandnotunique}. The error analysis below, however, does not require uniqueness of $u_{H,k}$, but holds for any discrete solution. In practice, we did not experience any issues with non-uniqueness.
 
 \subsubsection{Coercivity of local correction problems on  the space W}\label{weposdnesssubsection}
The effect of localizing the correction problems is quantified as follows. The proposition below  is proved in~\cite{LOD-Linear} for the case of linear elliptic problems, where the correction problems are also linear elliptic.  It also shows that the error between the corrector operator and its local version decays exponentially in the oversampling parameter $k$.
 \begin{prop}\label{my_proposition}
   Let Assumptions~\ref{assum_1}  be satisfied. Let $Q$ be the linear corrector defined in~\eqref{LinearOrthogonality} and its localized version $Q^k$ defined in~\eqref{Corr}. There exists a constant $0<\nu<1$ such that for any $v_H \in V_H$, the following inequality is satisfied 
   \[|(Q-Q^k)v_H |_1\lesssim C_\mathrm{ol}^\frac{1}{2} \nu^k |v_H|_1,\]
   where $C_\mathrm{ol}$ is the constant in~\eqref{Uniform_regu}.
\end{prop}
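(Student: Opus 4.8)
The plan is to reduce the estimate to the standard LOD decay result by exploiting that our linearized corrector problems are, for a \emph{fixed} linearization point $p^*$, genuinely linear elliptic problems associated to the bounded and coercive bilinear form $\mathcal{A}_L(p^*,\cdot,\cdot)$. Concretely, I would first record that $\mathcal{A}_L(p^*,\cdot,\cdot)$ satisfies the hypotheses needed in~\cite{LOD-Linear}: coercivity on $W$ (and on each $W(N^k(T))$) with constant $\lambda$, and boundedness with constant $\Lambda_1$. For the Ka\v{c}anov-type linearization this is immediate from Assumption~\ref{assum_1}, since then $\mathcal{A}_L(p^*,v_1,v_2)=(\alpha(x,p^*)\nabla v_1,\nabla v_2)$ has exactly the ellipticity and boundedness constants of~\eqref{ellip}. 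For the Fr\'echet linearization one invokes the well-posedness established in Subsection~\ref{weposdnesssubsection}, which furnishes analogous coercivity and boundedness constants (possibly depending on $\|\nabla p^*\|$ and $\Lambda_0$ through the term $\beta(x,p^*)=\alpha_s(x,p^*)\nabla p^*$); the key point is only that such constants exist and are independent of $H$, $h$, and $k$.

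Next, I would set up the classical LOD localization argument. Fix $v_H\in V_H$ and let $\phi := (Q-Q^k)v_H \in W$. Using the defining orthogonality relations~\eqref{LinearOrthogonality} and~\eqref{Corr}, one tests with $\phi$ and uses coercivity to write $\lambda|\phi|_1^2 \le \mathcal{A}_L(p^*,\phi,\phi)$, then rewrites the right-hand side as a sum over elements $T$ of local differences $\mathcal{A}_L(p^*,(Q_T-Q_{T,k})v_H,\phi)$. The heart of the matter is the standard cutoff-function argument: one introduces a Lipschitz partition-of-unity cutoff $\eta_T$ that equals $1$ outside $N^{k-1}(T)$ (or on a suitable annulus), splits $\phi = (1-\eta_T)\phi + \eta_T\phi$, uses the Galerkin orthogonality of the global corrector against the truncated-away part, and controls the remaining terms using the interpolation stability/approximation properties~\eqref{Interpolation_properties} together with the bound on the number of overlapping patches~\eqref{Uniform_regu}. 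This yields the geometric factor $\nu^k$ for some $0<\nu<1$ depending only on $\lambda$, $\Lambda_1$ and the mesh shape-regularity, and the combinatorial factor $C_\mathrm{ol}^{1/2}$ from summing the local contributions via Cauchy--Schwarz over the $O(C_\mathrm{ol})$ overlapping patches.

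Rather than reproducing this entire machinery, the cleanest exposition is to state explicitly that the linearized corrector problem for fixed $p^*$ falls within the abstract framework of~\cite{LOD-Linear} — a conforming Galerkin correction against the kernel $W$ of a stable quasi-interpolation $I_H$, driven by a bounded coercive bilinear form — and therefore Proposition~\ref{my_proposition} is exactly the conclusion of the corresponding result there (e.g.\ the exponential decay theorem), applied verbatim with $\mathcal{A}$ replaced by $\mathcal{A}_L(p^*,\cdot,\cdot)$. I would then add one sentence noting that the hidden constant and the rate $\nu$ depend on the coercivity and continuity constants of $\mathcal{A}_L(p^*,\cdot,\cdot)$, which in the Fr\'echet case incorporate $\|\nabla p^*\|_{0,\infty}$ and $\Lambda_0$ via $\beta$, but crucially not on the oscillations of $\alpha$ in $x$ nor on $H$, $h$.

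The main obstacle — and the only place where anything beyond "cite~\cite{LOD-Linear}" is needed — is verifying that the Fr\'echet-linearized bilinear form is still coercive and bounded on $W$ uniformly, since the added zeroth-order-in-$v$ term $v\,\alpha_s(x,p^*)\nabla p^*$ is not symmetric and could in principle destroy coercivity if $\nabla p^*$ is large. This is precisely what Subsection~\ref{weposdnesssubsection} is dedicated to, so for the proof of this proposition I would simply invoke that well-posedness (which provides inf-sup/coercivity constants), and note that the LOD decay proof only uses boundedness plus an inf-sup (or coercivity) condition on $W$, both of which are then available; no symmetry of $\mathcal{A}_L$ is required for the exponential decay argument.
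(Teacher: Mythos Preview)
Your proposal is correct and matches the paper's approach: the paper does not give a self-contained proof either, but cites the linear LOD result from~\cite{LOD-Linear} (and~\cite{Maier2020} for the inf-sup framework) and then verifies applicability separately for the two linearizations, with Lemma~\ref{lemma} supplying the coercivity on $W$ in the Fr\'echet case under the smallness condition $C_IC_{p^*}H<1$. The only nuance worth tightening in your write-up is that the Fr\'echet coercivity constant is not strictly independent of $H$ but rather requires $H$ below a threshold; once that threshold is met, the decay rate $\nu$ and the hidden constant are indeed uniform in $H$, $h$, $k$ and the oscillations of $\alpha$.
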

\textbf{Applicability of Proposition \ref{my_proposition} to problem~\eqref{Corr}: }
 Kačanov linearization model ends up with a linear elliptic problem that inherits the assumptions of coercivity and boundedness. As a result, the correction problem~\eqref{Corr} is well-posed. Moreover, the inherited ellipticity  ensures the validity of the exponential decaying error  for the correctors that are computed using  the Kačanov technique. For Fréchet derivative case, in~\cite{Maier2020}, the exponential decay of the correction operator is proved for general bilinear forms that satisfy an inf-sup condition satisfied on the fine-scale space $W$ and its subspace $W^c_{k,K}:=\{w \in W: \operatorname{supp(w)} \subseteq \Omega \backslash N^k(K)\}$. Therefore, this ensures the uniqueness of the solution for the correction problem. For our case when using the Fréchet-type linearization, we show the coercivity of $\mathcal A_L$ on the space $W$ for sufficiently small $H$. The same proof can also be applied for $W^c_{k,K}$.
 
 \begin{lemma}\label{lemma} 
 For a fixed $p^* \in H^1_0(\Omega)$, let $C_{p^*}$ be such that $\|\alpha_s(x,p^*)\nabla p^*\|_\infty \leq C_{p^*}$.  If $H$ is sufficiently small  such that $C_IC_{p^*}H<1$ with  $C_I$ is the constant from~\eqref{Interpolation_properties}, then the bilinear form $\mathcal{A}_L(p^*,w,w) $ defined by Fréchet derivative  is coercive.
 \end{lemma}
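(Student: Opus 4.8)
The plan is to regard $\mathcal{A}_L(p^*,\cdot,\cdot)$ with the Fréchet linearization as a lower-order perturbation of the uniformly elliptic form $(\alpha(x,p^*)\nabla\cdot,\nabla\cdot)_\Omega$ and to absorb that perturbation using the defining property of the fine-scale space $W$. Fix $w\in W$. In the corrector problem~\eqref{remove_the_constant} the affine contribution $-(p^*\alpha_s(x,p^*)\nabla p^*,\nabla w)_\Omega$ stemming from the term $(v-p^*)\alpha_s(x,p^*)\nabla p^*$ appears identically on both sides and cancels, so the quantity to bound from below is
\[
\mathcal{A}_L(p^*,w,w)=\big(\alpha(x,p^*)\nabla w,\nabla w\big)_\Omega+\big(w\,\alpha_s(x,p^*)\nabla p^*,\nabla w\big)_\Omega .
\]
By the ellipticity in Assumption~\ref{assum_1} the first term is at least $\lambda|w|_1^2$, while the Cauchy--Schwarz inequality together with $\|\alpha_s(x,p^*)\nabla p^*\|_\infty\le C_{p^*}$ gives
\[
\big|\big(w\,\alpha_s(x,p^*)\nabla p^*,\nabla w\big)_\Omega\big|\le \|\alpha_s(x,p^*)\nabla p^*\|_\infty\,\|w\|_0\,|w|_1\le C_{p^*}\,\|w\|_0\,|w|_1 .
\]

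The crucial step is to make the factor $\|w\|_0$ small. Since $w\in W=\ker I_H$ we have $I_Hw=0$, so the $L^2$-approximation estimate in~\eqref{Interpolation_properties} reads $\|w\|_{0,K}\le C_I H|w|_{1,N(K)}$ for every $K\in\mathcal{T}_H$; squaring, summing over $K\in\mathcal{T}_H$, and using that the patches $\{N(K)\}$ have uniformly bounded overlap (a consequence of shape-regularity and quasi-uniformity of $\mathcal{T}_H$, absorbed into $C_I$) yields $\|w\|_0\le C_I H|w|_1$. Inserting this above gives
\[
\mathcal{A}_L(p^*,w,w)\ \ge\ \big(\lambda-C_I C_{p^*}H\big)\,|w|_1^2 ,
\]
which is strictly positive as soon as $H$ is small enough in the sense of the statement; this proves coercivity of $\mathcal{A}_L(p^*,\cdot,\cdot)$ on $W$. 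Since $W^c_{k,K}\subseteq W$, the same interpolation estimate and hence the same chain of inequalities hold verbatim on $W^c_{k,K}$, so coercivity there follows identically; combined with the obvious boundedness of $\mathcal{A}_L(p^*,\cdot,\cdot)$ (from Assumption~\ref{assum_1} and $\|\alpha_s(x,p^*)\nabla p^*\|_\infty\le C_{p^*}$), Lax--Milgram then yields well-posedness of the global and localized Fréchet corrector problems.

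The part that needs the most care is conceptual rather than computational: unlike in the Kačanov case the Fréchet form carries the genuinely non-symmetric, sign-indefinite lower-order term $(w\,\alpha_s(x,p^*)\nabla p^*,\nabla w)_\Omega$, so coercivity simply fails on all of $H^1_0(\Omega)$ and is recovered only on $W$, where the kernel property of $I_H$ supplies the extra factor $H$ via the Poincaré-type estimate $\|w\|_0\lesssim H|w|_1$. One should also record that the constant $C_{p^*}$ is finite only under some additional regularity of the linearization point $p^*$ (e.g.\ $p^*\in W^{1,\infty}(\Omega)$), and keep track of the overlap constant that enters when the elementwise interpolation bounds are summed; both are harmless but must be acknowledged when matching the precise smallness threshold in the statement.
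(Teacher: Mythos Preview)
Your proof is correct and follows essentially the same route as the paper: both split $\mathcal{A}_L(p^*,w,w)$ into the elliptic principal part and the lower-order term, bound the latter by $C_{p^*}\|w\|_0|w|_1$, and then exploit $w\in W=\ker I_H$ together with the interpolation estimate~\eqref{Interpolation_properties} to gain the factor $H$ and absorb. Your write-up is in fact a bit more careful than the paper's (you track the ellipticity constant $\lambda$ explicitly, note the cancellation of the affine $-p^*\alpha_s(x,p^*)\nabla p^*$ contribution in the corrector problem, and flag that finiteness of $C_{p^*}$ tacitly requires $\nabla p^*\in L^\infty$), but the argument is the same.
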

 \begin{proof}
     By Assumption \ref{assum_1},  $C_{p^*}$ can always be found and depends on the choice of the (fixed) linearization point $p^*$.
  By the ellipticity of $\alpha$, we obtain for any $w\in W$ that 
\begin{align*}
    |w|_1^2 &\lesssim \int_\Omega (\alpha(x,p^*)\nabla w +w\alpha_s(x,p^*)\nabla p^*)\cdot\nabla w dx -\int_\Omega w\alpha_s(x,p^*)\nabla p^*\cdot\nabla w dx \\
    & \lesssim (\mathcal{A}_L(p^*,w,w)  + \|\alpha_s(x,p^*)\nabla p^*\|_\infty \|w\|_0)|w|_1 \\
     & \lesssim (\mathcal{A}_L(p^*,w,w)  + \|\alpha_s(x,p^*)\nabla p^*\|_\infty \|w-I_Hw\|_0)|w|_1 \\
    &\lesssim \mathcal{A}_L(p^*,w,w)  + C_IC_{p^*}H |w|_1^2.
\end{align*}

 \end{proof}
 This implies the well-posedness of the correction problem \eqref{LinearOrthogonality} that is solved using Fréchet derivative as a linearization formula. Moreover, the exponential decay of $Q$ as in Proposition~\ref{my_proposition} holds.

\section{Numerical experiments}\label{SectionExperiment}

In this section, we test our multiscale method on several numerical experiments, illustrating the convergence with respect to the mesh size and the influence of the linearization strategy. The  experiments consider problem~\eqref{PDEPronlem} with the same spatial multiscale coefficient, but different nonlinearities inspired by models for the stationary Richards equation, see~\cite{exmaples}. 
We choose the computational domain $\Omega=[0,1]^2$ and the right-hand side $f \in L^2(\Omega)$ as
\begin{equation}\label{right_hand}
    f(x) = \begin{cases}
    0.1 & \text{if } x_2 \leq 0.1, \\
    1 & \text{otherwise}.
\end{cases}
\end{equation}
For the nonlinear diffusion coefficient, we choose $\alpha(x,u)=c(x)k(u)$, where the spatial part $c$ is shown in Figure~\ref{variation} and the model for $k$ is given in each subsection. The spatial coefficient $c$ is piecewise constant on a scale $\varepsilon=2^{-6}$ and exhibits a high-contrast channel. Our results are compared to a reference solution $u_h \in V_h$ obtained using a standard FEM on a fine mesh of size $h=2^{-7}$, which resolves all features of $c$. The local correction problems~\eqref{Corr} are solved on the same fine mesh
. As an interpolation operator in the definition of $W$, we use the $L^2$ projection. The coarse-scale mesh sizes are chosen as $H=2^{-1},2^{-2},\ldots ,2^{-6}$. We emphasize that $H$ generally does not resolve the fine-scale features of the solution $u$. For the implementation of the LOD method, we follow the algorithm in~\cite{TheCode} with four different oversampling parameters $k=1,2,3,4$. The code is available at https://github.com/Maherkh/LodNonmonotoneNonlinearPDE. In particular, the linear space $V_{H,k}$ is constructed only once, and then we solve the global nonlinear problem~\eqref{local_probelm } iteratively. Precisely, we use the Kačanov iterative scheme with tolerance $\text{tol}=10^{-12}$ and starting value $u_0=p^*$, where $p^*$ is the linearization point used for the correction problems and is discussed below. In all experiments, this converged within the maximum of 10 iterations. 
\begin{figure} 
    \centering
    \includegraphics[width=0.5\textwidth]{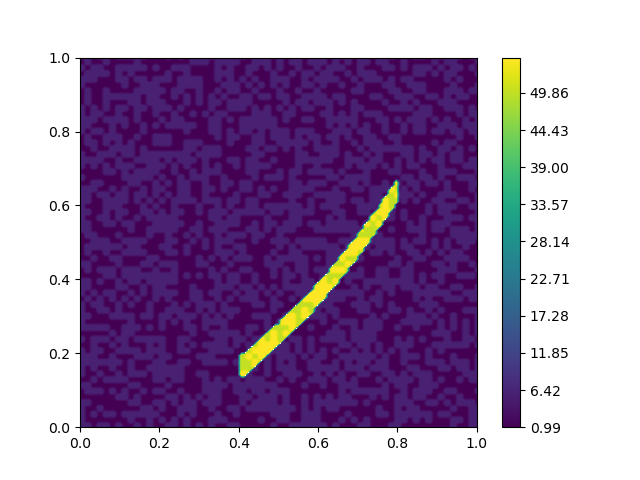}
    \caption{Spatial coefficient $c(x)$ }\label{variation}
\end{figure}
 In our experiments, we compare the performance of the two linearization methods introduced above. Furthermore,  we  examine  the impact of  different linearization points $p^*$ on the error behavior.  The chosen points are ranging from functions close to the analytical solution to functions further away from the analytical solution. 
 Precisely, we consider the following options: 
\[ p^* = 
\begin{cases}

    g(x)= 10x y (1 - x)  y  (1 - y),\\
    g_1(x)  =  0.5x y (1 - x)  y  (1 - y)e^{(5(x+y))},\\
    0 ,   \\
    
    u_H, & \text{FEM coarse solution, $H=\frac{1}{32}$,}   \\
    u_h, & \text{FEM reference solution solution, $h=\frac{1}{128}$,}\\
    \operatorname{ulod} & \text{LOD solution $H=\frac{1}{16}$, $k=4$, and $p^*=g_1.$}
\end{cases} \]
We provide a brief explanation of the choices of $p^*$ presented above. We use the coarse FEM solution $u_H$ because we believe it holds some information about the analytical solution. However, the size of the mesh $H$ does not recover the fine details of the analytical solution. Practically, the reference solution $u_h$ can not be used as a linearization point. However, it is used to investigate its impact on the linearization error. We tested the LOD solution $p^*=\operatorname{ulod}$ in order to check the effectiveness of an iterative LOD strategy on the improvement of the LOD discrete solution. Regarding $g \text{ and }g_1$, we choose them arbitrarily such that $g$ is of relatively small values and close to the solution. However, $g_1$ is obtained by scaling $g$ so that it deviates significantly away from the analytical solution. To study the convergence performance, we use the following relative errors
\[e_{\operatorname{LOD}}:= \frac{|u_h-u_{H,k}|_1}{|{u_h}|_1} \quad \text{and}\quad e_{H}:= \frac{\|{u_h-I_Hu_{H,k}}\|_0}{\|u_h\|_0},\]
called the relative upscaled error and the relative macroscopic error, respectively.
Similar to the linear case and the monotone nonlinear problem in \cite{Barbara}, we expect $e_{\operatorname{LOD}}$ to converge linearly with respect to $H$, up to a linearization error, cf. Theorem~\ref{my_proposition}. The relative error $e_H$ is expected to converge at the same rate as the $L^2$-best approximation error as in Corollary \eqref{Corollary} below.
 
   \subsection{Exponential model}
\textbf{Exponential model 1: } Before we get into an in-depth experimental study,  we aim  to investigate the effect of iteratively solving the global nonlinear problem on the development of the relative error. To verify this, we apply our LOD approach on the so-called exponential model \[k(u)=\exp(4u),\]
and 
\[f(x) = \begin{cases}
    100 & \text{if } x_2 \leq 0.25, \\
    2 & \text{otherwise}.
\end{cases} \]
The corrector problem is linearized via Kačanov techniques at $p^*=0$. 
In Figure \ref{errorvsiteration}, the plot demonstrates that the error is substantially reduced with   the number of iterations in particular for relatively smaller coarse mesh sizes and it starts to stagnates after few iterations. 

From this point onward, we perform our  experiments with the  right-hand side given in  \eqref{right_hand}. 
\begin{figure}
    \centering
    \includegraphics[width=0.55\linewidth]{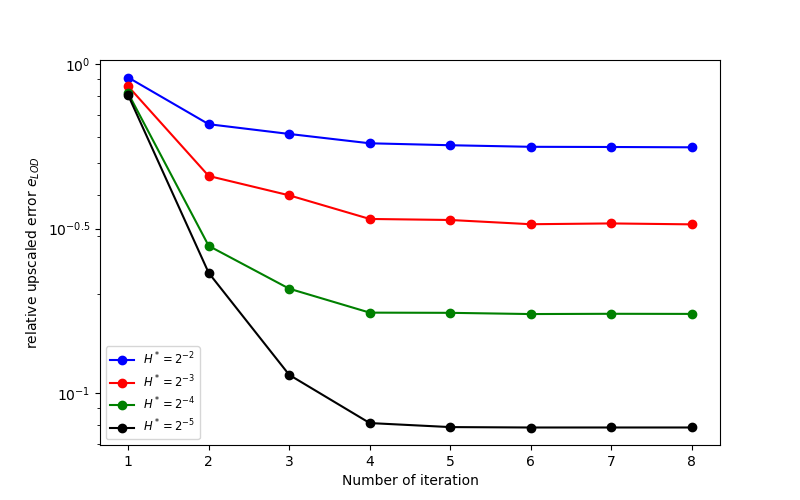}
    \caption{Relative upscaled error $e_{\operatorname{LOD}}$ with respect to the number of iterations obtained for a fixed oversampling parameter $k=4$.}
    \label{errorvsiteration}
\end{figure}

\textbf{Exponential model 2: } Here, we choose the the following  exponential model with $k(u)= \exp(2u)$. However, it is clear that $\alpha(x,s)$ does not satisfy Assumption~\ref{assum_1} for this model. Figure~\ref{ref_lod_solution_expo} shows the reference solution and an LOD approximation. Observe the influence of the high-contrast channel on the bottom right and the multiscale nature of both solutions arising from the rapid variations in $c(x)$. Both aspects are well captured by the LOD solution.
\begin{figure} [h]
    \centering
    \begin{minipage}{0.43\textwidth}
       \centering      \includegraphics[width=1.1\textwidth]{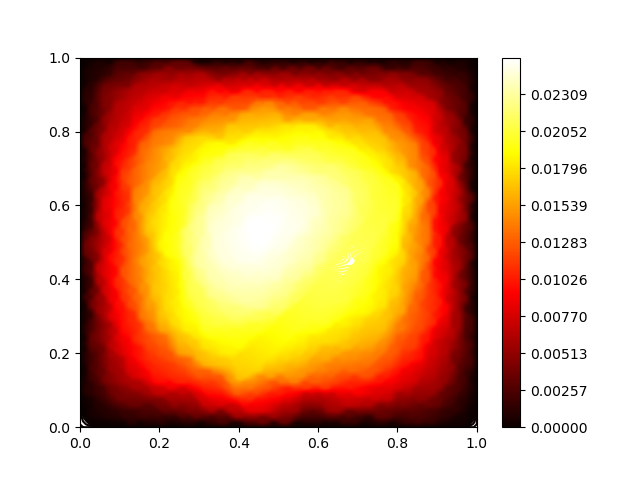}

    \end{minipage}
    \hfill
    \begin{minipage}{0.43\textwidth}
         \centering  
    \includegraphics[width=1.1\textwidth]{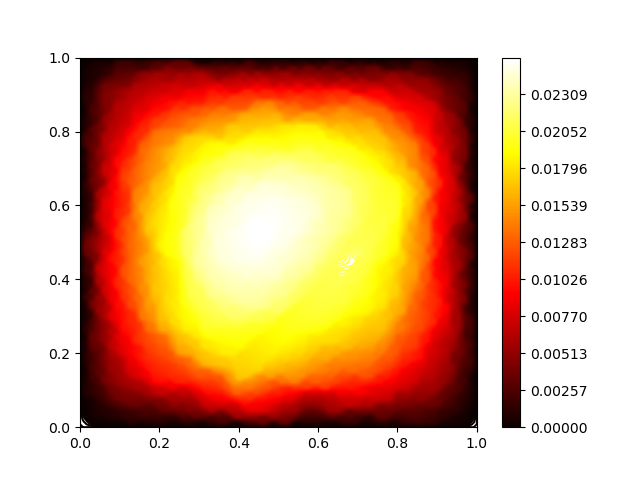}

    \end{minipage}
    \caption{Reference solution (left) and LOD solution (right) obtained by Kačanov linearization method for $H=2^{-4}$ and $k=4$.}\label{ref_lod_solution_expo}  
\end{figure}

\begin{figure}[ht]
    \centering
    \begin{minipage}{0.49\textwidth}
         \centering  
    \includegraphics[width=1.1\textwidth]{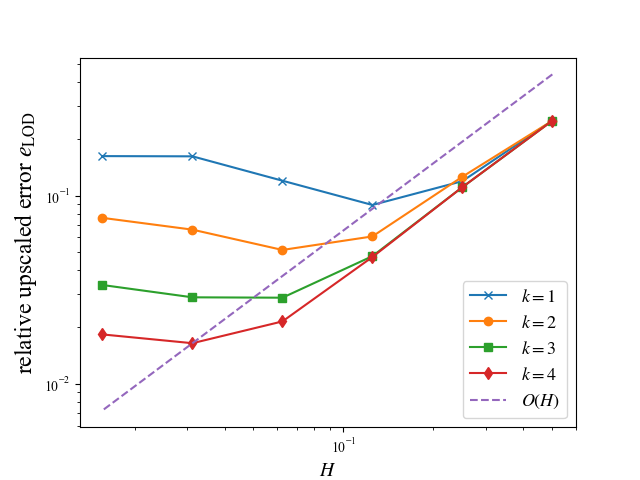}      \end{minipage}
    \hfill
    \begin{minipage}{0.49\textwidth}
       \centering  
    \includegraphics[width=1.1\textwidth]{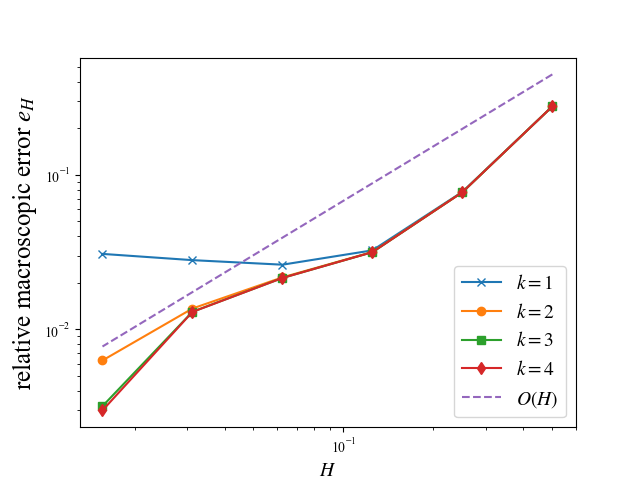}

    \end{minipage}
     
    \caption{Convergence history of $e_{\operatorname{LOD}}$ (left) and $e_H$ (right) for different oversampling parameters and fixed linearization point $p^{*}=u_H$ using Fréchet linearization.}\label{polp1}
\end{figure}

\begin{figure}[ht]
    \centering
    \begin{minipage}{0.49\textwidth}
         \centering  
    \includegraphics[width=1.1\textwidth]{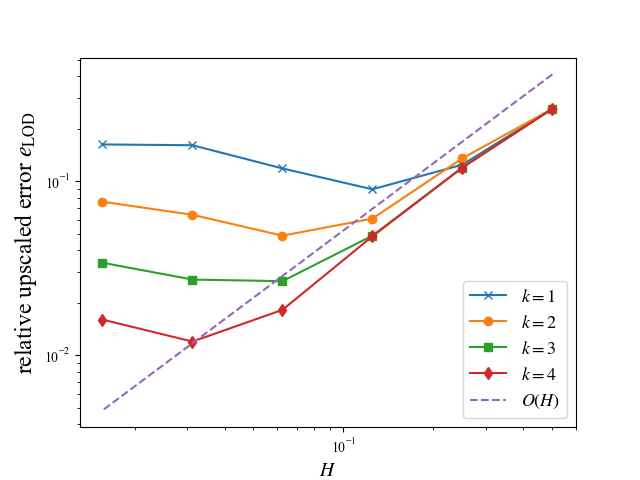}

    \end{minipage}
    \hfill
    \begin{minipage}{0.49\textwidth}
       \centering  
    \includegraphics[width=1.1\textwidth]{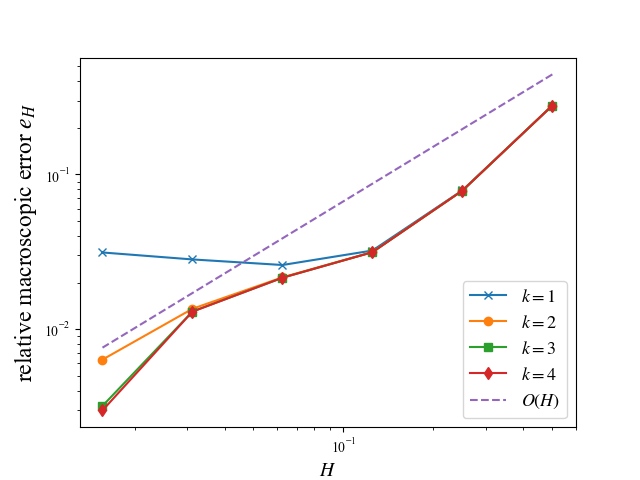}

    \end{minipage}
 
     \caption{Convergence history of $e_{\operatorname{LOD}}$ (left) and $e_H$ (right) for different oversampling parameters and fixed linearization point $p^{*}=u_H$ using Kačanov linearization.}\label{polp}
\end{figure}

Figures~\ref{polp1} and~\ref{polp} depict the convergence histories of our two error measures for the two linearization strategies and linearization point $p^*=u_H$. Regarding $e_{\operatorname{LOD}}$, we observe the predicted first order convergence rate. 
The saturation of the error curves corresponds to a domination of the localization error and sets in later for larger oversampling parameters. 
Note that the overall convergence rate may even seem to be slightly better than linear convergence.
 We observe that the convergence rate $H$ of the relative macroscopic errors $e_H$ are very similar for all oversampling parameters $k\geq2$. 
 We cannot hope for more than linear convergence of the $L^2$-best approximation due to the low regularity of the solution caused by the discontinuities in $c$.
Comparing Figures~\ref{polp1} and~\ref{polp}, the two linearization strategies show almost the same performance. In particular, no impact of the linearization error is observed, probably because $p^*=u_H$ is close enough to both solutions $u$ and $u_{H,k}$. 
\begin{figure}[ht]
    \centering
    \begin{minipage}{0.49\textwidth}
         \centering  
    \includegraphics[width=1.1\textwidth]{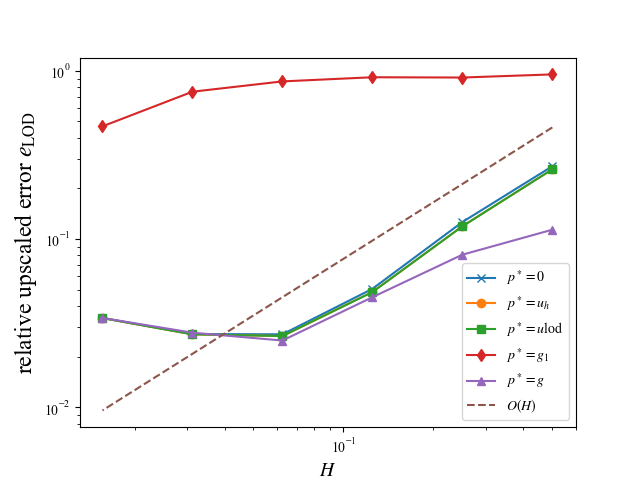} 
      
    \end{minipage}
    \hfill
    \begin{minipage}{0.49\textwidth}
       \centering  
    \includegraphics[width=1.1\textwidth]{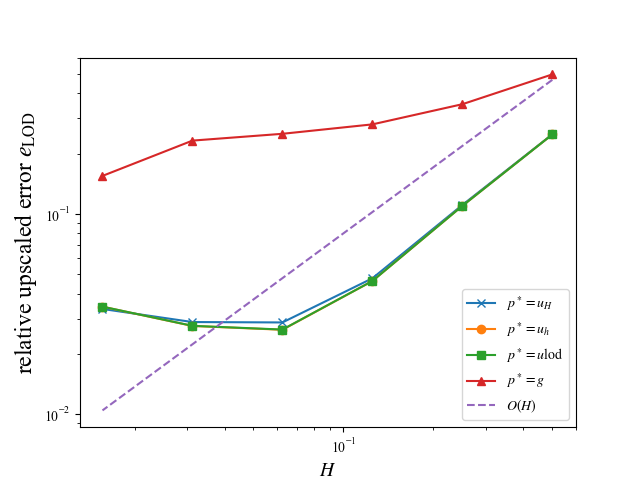 }

    \end{minipage}
    
    
    \caption{Convergence history of $e_{\operatorname{LOD}}$ for
    Kačanov linearization (left) and Fréchet method (right) with different linearization points and fixed oversampling parameter $k=3$.}\label{exp_comparion_the_same_methods}
\end{figure}
In Figure~\ref{exp_comparion_the_same_methods}, we study the impact of different linearization points on the convergence history, for both linearization techniques. For the Kačanov technique, the choices $p^* \in \{0,u_h,u_H,\text{ulod}\}$ all lead to very similar results, probably because all of them are close enough to the exact solution. Quantitatively, the choices $p^*= \operatorname{ulod}$ and $p^*= u_h$ perform best by a slight margin. Surprisingly, when $p^*=g$, we obtain even slightly better results, in particular for large mesh sizes $H$. 
In contrast, using $p^*=g_1$ leads to large error values, failing to reach the first order of convergence with respect to $H$ due to the dominance of the linearization error. 
Finally, results for $p^*=g_1$ are not depicted for the Fréchet technique, because the error values blows-up. This is in line with the theory as the condition $C_IC_{p^*}H<1$ is violated, see the proof of Lemma \ref{lemma} in Subsection~\ref{weposdnesssubsection}. 
In this experiment, Fréchet linearization, hence, appears to be more sensitive with respect to the choice of the linearization point. It is important to emphasize that when using a linearization point that is close to the analytical solution, computing the correction only once seems to be sufficient, because no significant improvement is obtained when using $p^*=\operatorname{ulod}$. 
\begin{figure}[ht]
    \centering
    \begin{minipage}{0.49\textwidth}
       \centering  
    \includegraphics[width=1.1\textwidth]{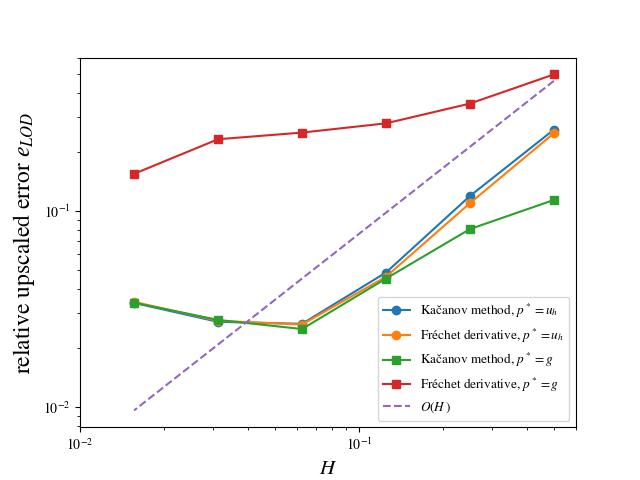 }
     
    \end{minipage}
    \hfill
    \begin{minipage}{0.49\textwidth}
         \centering  
    \includegraphics[width=1.1\textwidth]{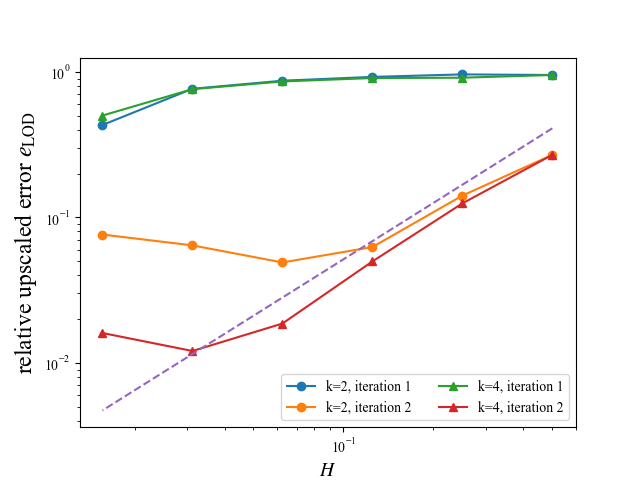}
    \end{minipage}
    \caption{Convergence history of $e_{\operatorname{LOD}}$ using different linearization points, $k=3$, with two different linearization methods (left), and for the iterative LOD (right).}\label{exp_compariondiffere}
\end{figure}

For even better visibility of these observations, we compare Fréchet and Kačanov linearization for two different choices of $p^*$ in one single Figure~\ref{exp_compariondiffere} (left). For $p^*=u_h$, we observe that both methods perform the same not only qualitatively but also quantitatively. The difference between Fréchet and Kačanov linearization goes to zero as the $p^*$ gets closer to the exact solution, which explains this observation. In contrast, when $p^*=g$, as discussed above, we observe a significant impact of the linearization method on the convergence. The Fréchet technique is clearly performing worse, which is explained by the fact that $\alpha_s(x,p^*)\nabla p^*$ becomes very large, so that the linearization error dominates the discretization error. In Figure~\ref{exp_compariondiffere} (right), we examine the Kačanov technique with $p^*=g_1$ in more detail. We observed in Figure~\ref{exp_comparion_the_same_methods} that $g_1$ is not a good choice and no convergence in the mesh size is obtained due to the large linearization error. However, taking the resulting bad LOD approximation as new linearization point $p^*$, the new LOD approximation recovers the expected convergence behavior. This shows the potential of iteratively computing a whole cascade of LOD solutions.  

\subsection{Van Genuchten model}\label{Van_Genuchten}
In our second experiment, we choose the nonlinearity according to the Van Genuchten model
 \[ k(u)=\frac{(1+\alpha |u|(1+(\alpha |u|)^2)^{-\frac{1}{2}})^2}{1+(\alpha|u|)^2},\quad \alpha=0.005 \]
and the spatial coefficient $c(x)$ as above. This nonlinearity grows more slowly for $|u|\to \infty$ than the exponential model and the values of $k(u)$ even remain bounded. Consequently, we expect that, in particular, the condition $C_IC_{p^*}H<1$ for the Fréchet linearization is satisfied for more linearization points $p^*$. 
\begin{figure}[ht]
    \centering
     \begin{minipage}{0.49\textwidth}
         \centering  
    \includegraphics[width=1.1\textwidth]{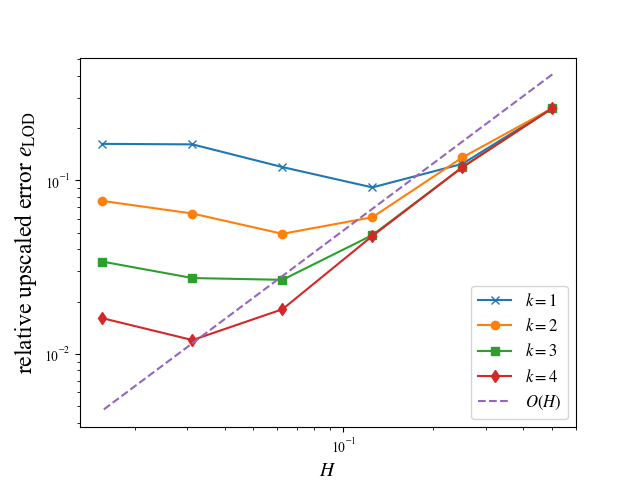}

    \end{minipage}
    \hfill
    \begin{minipage}{0.49\textwidth}
       \centering  
    \includegraphics[width=1.1\textwidth]{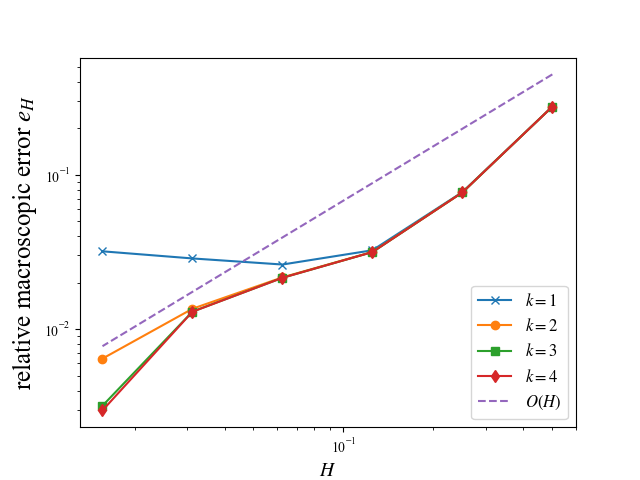} 
     
    \end{minipage}
   
   \caption{Convergence history of $e_{\operatorname{LOD}}$ (left) and $e_H$ (right) for different oversampling parameters and fixed linearization point $p^{*}=u_H$ using Fréchet linearization.}\label{ex1}
\end{figure}
As for the exponential model, we observe the expected rates for $e_H$ and $e_{\operatorname{LOD}}$ for both linearization strategies and $p^*\in\{0,u_h, u_H, \text{ulod}\}, $ cf. Figure~\ref{ex1}, where the experiment is tested for $p^*=u_H$. 
\begin{figure}[ht]
    \centering
    \begin{minipage}{0.49\textwidth}
         \centering  
    \includegraphics[width=1.1\textwidth]{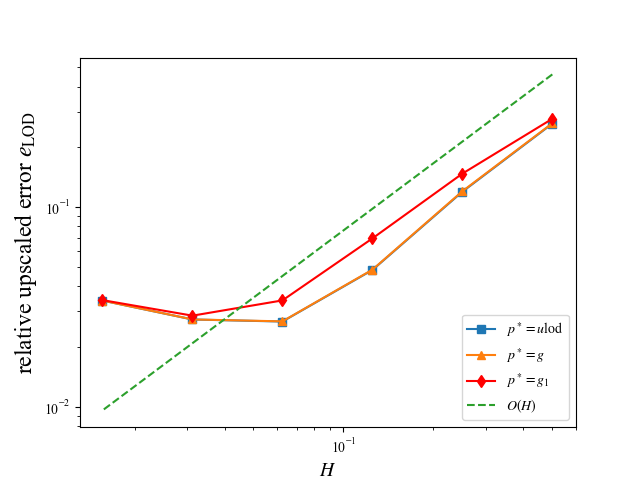}

    \end{minipage}
    \hfill
    \begin{minipage}{0.49\textwidth}
       \centering  
    \includegraphics[width=1.1\textwidth]{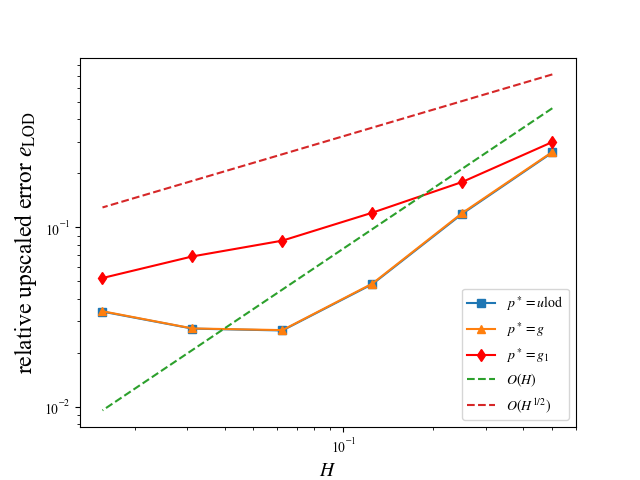} 
      
    \end{minipage}
  
    
    \caption{Convergence history of $e_{\operatorname{LOD}}$ for Kačanov method (left) and Fréchet method (right) with different linearization points and fixed oversampling parameter $k=3$.}\label{Different_methods_and_linearization_points}
\end{figure}
As a main difference from the exponential model, we focus in the following on the influence of the linearization point, in particular when $p^*=g$ or $p^*=g_1$. In Figure~\ref{Different_methods_and_linearization_points} (left), we observe that all linearization points perform qualitatively the same for the Kačanov technique. We especially emphasize that $p^*=g_1$, although having larger errors, now also shows the expected convergence rate in contrast to the exponential model. This different behavior is most probably due to the fact that  $k(u)$ yields small values for various choices of $p^*$ in the Van Genuchten model. 
In Figure~\ref{Different_methods_and_linearization_points} (right), we observe that all linearization points except for $g_1$ perform similar to the expected convergence rates also for the Fréchet linearization. Again, we emphasize the stark contrast to the exponential model, where now convergence in the mesh size occurs for $p^*=g$. The choice $p^*=g_1$ still yields the largest errors for the Fréchet linearization, but we now observe a convergence with respect to $H$, which is approximately of order $\frac{1}{2}$. This is in great contrast to the exponential model, where the method does not even converge. Due to the smaller values of $k$ in the Van Genuchten model, the condition $C_IC_{p^*}H<1$ appears to be satisfied here, but not for the exponential model as discussed above. 
 To conclude, the Fréchet linearization still seems to be more sensitive with respect to the choice of $p^*$, but the smaller values of $k(u)$ in the Van Genuchten model reduce this effect and, in general, make all results more robust with respect to the choice of $p^*$.
\subsection{Combined models}
In this section, we seek to additionally investigate whether the specific structure of the diffusion coefficient has a significant influence on the robustness results we obtained previously.  Up to this point,  the numerical experiments discussed above have been performed on a coefficient of the form $\alpha(x,u)=c(x)k(u)$, in which the  experiments show that the method is rather robust with respect to the choice of $p^*.$ In Figure \ref{robustness}, we  extend the experiments into coefficients of different forms. 
In Figure \ref{robustness} (left), we test several linearization points for the  following combination of Van Genuchten models
\[\alpha(x,u)=c_1(x)k_1(u)+c_2(x)k_2(u), \]
where \[k_1(u)=\frac{(1+\alpha |u|(1+(\alpha |u|)^2)^{-\frac{1}{2}})^2}{1+(\alpha|u|)^2},\quad \text{ and } \alpha=5, \] and $k_2(u)$ is Van Genuchten model given in Section \ref{Van_Genuchten}. In addition,  in Figure \ref{robustness} (right) we again investigate the robustness for another combined model of exponential and Van Genuchten models
\[\alpha(x,u)=c_1(x)k_1(u)+c_2(x)\exp(2u).\]
Both $c_1 \text{ and } c_2$ are of the same structure  but with different parameters so that    $c_2$ has a higher contrast than $c_1$. Note that $c_2$ is the same spatial coefficient we used in the previous examples.
\begin{figure}[ht]
    \centering
    \begin{minipage}{0.49\textwidth}
         \centering  
    \includegraphics[width=1.1\textwidth]{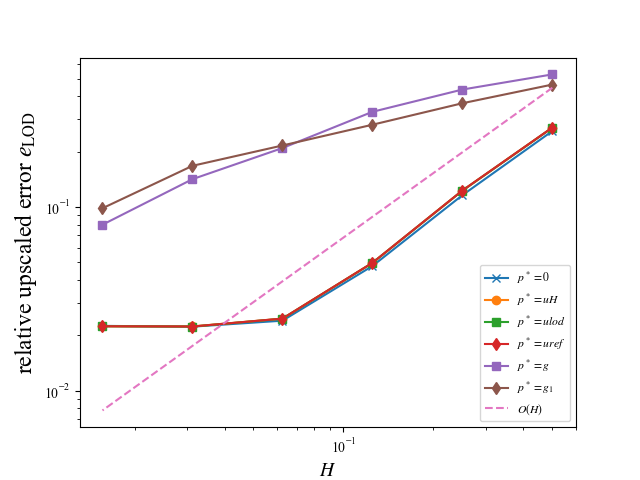}

    \end{minipage}
    \hfill
    \begin{minipage}{0.49\textwidth}
       \centering  
    \includegraphics[width=1.1\textwidth]{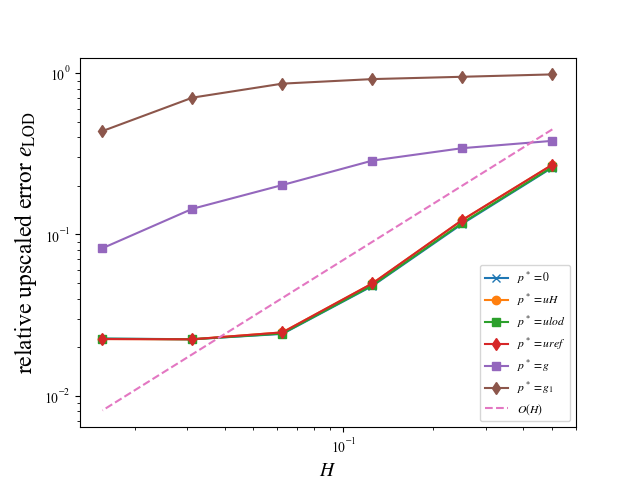} 
      
    \end{minipage}
  
    
    \caption{Convergence history of $e_{\operatorname{LOD}}$ of Kačanov method for several linearization points and fixed oversampling parameter $k=3$ applied on $\alpha(x,u)$ of combined models.}\label{robustness}
\end{figure}
The numerical results in Figure \ref{robustness} still show robustness of the numerical technique regardless of the form of the diffusion coefficient.

 \section{Theoretical justification via    error estimates}\label{SectionError}
 This section is devoted to a theoretical error analysis of the proposed approach, which, to some extent, theoretically interprets the practical results discussed earlier. We present the error analysis of the multiscale approach introduced in Section~\ref{SectionMultiscaleMethod}. Specifically, we outline the analysis for the two linearization strategies used in the correction computations, demonstrate the behavior of the linearization method, and examine the impact of the linearization points on the convergence rate tested in the numerical experiments above. 
 We need to additionally quantify the error from the linearization of the correction problems. Given $ v \in H_0^1(\Omega)$ and for a fixed linearization point $p^* \in H_0^1(\Omega)$, we define the linearization error corresponding to the nonlinear form $\alpha(x,v)\nabla v $ as 
\[\eta(v) = \sup_{w \in H_0^1(\Omega), |w|_1=1}\Bigl|\int_\Omega[\alpha(x,v)\nabla v-\mathbf{A}_L(x,p^*,\nabla v)]\cdot\nabla w dx\Bigr|.\]
We can now formulate our main result.
\begin{theorem}\label{DecayingBound}
 Let Assumptions~\ref{assum_1}  be satisfied. Let $u$ be the solution to~\eqref{WeakForm}, and let $u_{H,k}$ be the multiscale solution to~\eqref{local_probelm }. Then it holds that
\[ |u-u_{H,k}|_1 \lesssim (H+{C_\mathrm{ol}^\frac{1}{2} \nu^k})\|f\|_0+\eta(u)+G(u,u_{H,k}, p^*),\]
where, given $v_{H,k}=(\operatorname{id}-Q^k)I_H u$, we define $G(u,u_{H,k}, p^*)$ as
\begin{align*}
    G(u,u_{H,k}, p^*) &:=\|(\alpha(x,p^*) -\alpha(x,u))\nabla v_{H,k}+(\alpha(x,u_{H,k})-\alpha(x,p^*))\nabla u_{H,k}\|_0.
\end{align*}
\end{theorem}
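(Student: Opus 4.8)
The plan is to proceed by the standard LOD error-splitting strategy, adapted to accommodate the nonmonotone nonlinearity. First I would introduce the intermediate function $v_{H,k}=(\operatorname{id}-Q^k)I_H u \in V_{H,k}$ and write the error as $|u-u_{H,k}|_1 \le |u-v_{H,k}|_1 + |v_{H,k}-u_{H,k}|_1$, treating the two pieces separately.

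For the first piece, I would compare $v_{H,k}$ with the \emph{ideal} corrected function $v_H^{\mathrm{ideal}}:=(\operatorname{id}-Q)I_H u$. By Proposition~\ref{my_proposition} (which applies to the linearized correctors, using Lemma~\ref{lemma} for the Fréchet case to guarantee coercivity), $|v_{H,k}-v_H^{\mathrm{ideal}}|_1 = |(Q-Q^k)I_H u|_1 \lesssim C_\mathrm{ol}^{1/2}\nu^k |I_H u|_1 \lesssim C_\mathrm{ol}^{1/2}\nu^k \|f\|_0$, using the stability of $I_H$ from~\eqref{Interpolation_properties} and the a priori bound~\eqref{UnqueSolution}. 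Then $|u - v_H^{\mathrm{ideal}}|_1$ is controlled using the linearized orthogonality~\eqref{LinearOrthogonality}: writing $u - v_H^{\mathrm{ideal}} = (u - I_H u) + Q I_H u$ and noting that $u - I_H u \in W$ (kernel of $I_H$), one gets $|u-v_H^{\mathrm{ideal}}|_1^2 \lesssim \mathcal{A}_L(p^*, u-v_H^{\mathrm{ideal}}, u-I_Hu)$ by coercivity; the $QI_Hu$ contribution is handled since $\mathcal{A}_L(p^*, I_Hu - QI_Hu, w)=0$ for $w = u - I_Hu \in W$. The remaining term $\mathcal{A}_L(p^*, u, u-I_Hu)$ is rewritten by \emph{adding and subtracting the nonlinear form}: $\mathcal{A}_L(p^*,u,u-I_Hu) = \mathcal{A}(u;u,u-I_Hu) + [\mathcal{A}_L(p^*,u,\cdot)-\mathcal{A}(u;u,\cdot)](u-I_Hu)$. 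The first summand equals $F(u-I_Hu)$ by~\eqref{WeakForm}, which is bounded by $\|f\|_0\|u-I_Hu\|_0 \lesssim H\|f\|_0|u|_1$ via the $L^2$-approximation property of $I_H$; the second summand is exactly $\eta(u)\cdot|u-I_Hu|_1 \lesssim \eta(u)\|f\|_0$ by definition of the linearization error $\eta$. Collecting these yields the $(H + C_\mathrm{ol}^{1/2}\nu^k)\|f\|_0 + \eta(u)$ contribution.

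For the second piece, $e_{H,k}:=v_{H,k}-u_{H,k} \in V_{H,k}$, I would test with $w = e_{H,k}$ itself. Using coercivity of $\mathcal{A}(u_{H,k};\cdot,\cdot)$ (with argument $u_{H,k}$ frozen, which inherits ellipticity), $\lambda|e_{H,k}|_1^2 \le \mathcal{A}(u_{H,k};v_{H,k}-u_{H,k},e_{H,k}) = \mathcal{A}(u_{H,k};v_{H,k},e_{H,k}) - (f,e_{H,k})$, using the discrete equation~\eqref{local_probelm }. Now the crucial manipulation: I insert $\mathcal{A}(u;u,e_{H,k}) = (f,e_{H,k})$ from~\eqref{WeakForm} (legal since $e_{H,k}\in V_{H,k}\subset H^1_0(\Omega)$) to get $\lambda|e_{H,k}|_1^2 \le \mathcal{A}(u_{H,k};v_{H,k},e_{H,k}) - \mathcal{A}(u;u,e_{H,k})$. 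Writing this out as $\int_\Omega[\alpha(x,u_{H,k})\nabla v_{H,k} - \alpha(x,u)\nabla u]\cdot\nabla e_{H,k}\,dx$ and inserting $\pm\alpha(x,u)\nabla v_{H,k}$, $\pm\alpha(x,p^*)\nabla v_{H,k}$, $\pm\alpha(x,p^*)\nabla u_{H,k}$ appropriately, I split into (i) $\int\alpha(x,u)\nabla(v_{H,k}-u)\cdot\nabla e_{H,k}$, bounded by $\Lambda_1|v_{H,k}-u|_1|e_{H,k}|_1$ which is the already-estimated first piece, and (ii) the terms $\int[(\alpha(x,u_{H,k})-\alpha(x,p^*))\nabla u_{H,k} + (\alpha(x,p^*)-\alpha(x,u))\nabla v_{H,k}]\cdot\nabla e_{H,k}$, bounded by $G(u,u_{H,k},p^*)|e_{H,k}|_1$. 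Dividing through by $|e_{H,k}|_1$ gives $|e_{H,k}|_1 \lesssim |v_{H,k}-u|_1 + G(u,u_{H,k},p^*)$, and combining with the first piece completes the proof.

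The main obstacle I anticipate is the careful bookkeeping in the second piece: unlike the monotone case, one cannot exploit a monotonicity/Lipschitz estimate on $\mathcal{A}$ to directly close the argument, so the "mismatch" terms between $\alpha(x,u_{H,k})$, $\alpha(x,p^*)$ and $\alpha(x,u)$ cannot be absorbed — they must simply be carried along and collected into the term $G(u,u_{H,k},p^*)$ in the final bound. One has to choose exactly which intermediate coefficients to add and subtract so that the residual matches the \emph{stated} definition of $G$ (and not merely some equivalent expression), and one must verify that the linearization-error term genuinely separates as $\eta(u)$ evaluated at the exact solution rather than at $v_{H,k}$ or $u_{H,k}$. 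Getting the first of these right hinges on the choice $v_{H,k}=(\operatorname{id}-Q^k)I_Hu$ so that $\alpha(x,p^*)\nabla v_{H,k}$ appears naturally from the linearized corrector equation; getting the second right requires that the $\eta$ term be generated at the step where~\eqref{WeakForm} is invoked with the \emph{exact} $u$, as in the first piece above.
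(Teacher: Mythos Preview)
Your overall strategy matches the paper's proof: split via $v_{H,k}=(\operatorname{id}-Q^k)I_Hu$, bound $|u-v_{H,k}|_1$ via the ideal corrector and Proposition~\ref{my_proposition}, then bound $|v_{H,k}-u_{H,k}|_1$ using Galerkin orthogonality together with coercivity. Two concrete issues, however, keep the plan from closing as written.

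\emph{First piece (minor).} Coercivity gives $|u-v_H^{\mathrm{ideal}}|_1^2 \lesssim \mathcal{A}_L(p^*, u-v_H^{\mathrm{ideal}}, u-v_H^{\mathrm{ideal}})$, not with test function $u-I_Hu$. The point is that $u-v_H^{\mathrm{ideal}}$ itself lies in $W$ (since $I_H(u-v_H^{\mathrm{ideal}})=0$), so the orthogonality~\eqref{LinearOrthogonality} kills $\mathcal{A}_L(p^*,v_H^{\mathrm{ideal}},u-v_H^{\mathrm{ideal}})$ in the \emph{second} slot, leaving $\mathcal{A}_L(p^*,u,u-v_H^{\mathrm{ideal}})$. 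This is what the paper does; your decomposition of the third argument and invocation of orthogonality with $w=u-I_Hu$ is not the right mechanism (and would fail in the non-symmetric Fr\'echet case).

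\emph{Second piece (the real gap).} You start the coercivity step with the frozen coefficient $\alpha(x,u_{H,k})$, obtaining $\lambda|e_{H,k}|_1^2 \le \int_\Omega[\alpha(x,u_{H,k})\nabla v_{H,k}-\alpha(x,u)\nabla u]\cdot\nabla e_{H,k}\,dx$. But your proposed add/subtracts do \emph{not} reduce this to (i)+(ii): a direct computation shows
\[
\alpha(x,u_{H,k})\nabla v_{H,k}-\alpha(x,u)\nabla u
= \alpha(x,u)\nabla(v_{H,k}-u) + [\text{$G$-integrand}] + (\alpha(x,u_{H,k})-\alpha(x,p^*))\nabla e_{H,k},
\]
so there is a leftover term $\int_\Omega(\alpha(x,u_{H,k})-\alpha(x,p^*))\nabla e_{H,k}\cdot\nabla e_{H,k}\,dx$ that cannot be absorbed without an additional smallness assumption on $\|u_{H,k}-p^*\|_\infty$. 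The paper avoids this by starting coercivity with $\alpha(x,p^*)$ instead: then the bilinear form already produces $\alpha(x,p^*)\nabla v_{H,k}-\alpha(x,p^*)\nabla u_{H,k}$, and after inserting $\pm\alpha(x,u)\nabla v_{H,k}$ and $\pm\alpha(x,u)\nabla u$ together with the Galerkin orthogonality $\int(\alpha(x,u)\nabla u-\alpha(x,u_{H,k})\nabla u_{H,k})\cdot\nabla e_{H,k}=0$, one lands exactly on $G(u,u_{H,k},p^*)+\Lambda_1|u-v_{H,k}|_1$ with no residual. Switching your coercivity coefficient from $\alpha(x,u_{H,k})$ to $\alpha(x,p^*)$ fixes the argument.
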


A detailed proof of this theorem   and the error analysis will be presented below. The theorem shows that we achieve a convergence of first order up to the linearization error. This rate has been observed numerically in Section \ref{SectionExperiment} and  also compatible with the results obtained in~\cite{Barbara} for the monotone nonlinear problem up to  linearization error. In the corollary below, we estimate the error for the finite element part of the LOD solution, namely, $I_Hu_{H,k}$.
\begin{corollary}\label{Corollary}
     Let Assumptions~\ref{assum_1}  be satisfied. Let $u$ be the solution of~\eqref{WeakForm} and let $u_{H,k}$ be the LOD solution of~\eqref{local_probelm }. Then we have 
     \begin{equation}\label{L2Estimate}
         \|u-I_Hu_{H,k}\|_0\lesssim  \|u-I_Hu\|_0+\|u-u_{H,k}\|_0 +H|u-u_{H,k}|_1.
     \end{equation}
 \end{corollary}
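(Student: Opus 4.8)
The plan is to bound $\|u - I_H u_{H,k}\|_0$ by inserting intermediate quantities and exploiting the stability and approximation properties of $I_H$ from~\eqref{Interpolation_properties}. First I would split via the triangle inequality,
\[
\|u - I_H u_{H,k}\|_0 \leq \|u - I_H u\|_0 + \|I_H u - I_H u_{H,k}\|_0 = \|u - I_H u\|_0 + \|I_H(u - u_{H,k})\|_0,
\]
using linearity of $I_H$. The first term is already one of the terms on the right-hand side of~\eqref{L2Estimate}, so it remains only to control $\|I_H(u-u_{H,k})\|_0$.

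For the second term I would write $e := u - u_{H,k} \in H_0^1(\Omega)$ and further split
\[
\|I_H e\|_0 \leq \|e\|_0 + \|e - I_H e\|_0.
\]
The term $\|e\|_0 = \|u - u_{H,k}\|_0$ is the second term appearing in~\eqref{L2Estimate}. For the last term, I would use the elementwise approximation property $\|v - I_H v\|_{0,K} \lesssim H |v|_{1,N(K)}$ from~\eqref{Interpolation_properties}: summing the squares over all $K \in \mathcal{T}_H$ and invoking the finite overlap of the patches $N(K)$ (a consequence of shape-regularity and quasi-uniformity, cf.~\eqref{Uniform_regu} with $k=1$), one obtains $\|e - I_H e\|_0^2 = \sum_K \|e - I_H e\|_{0,K}^2 \lesssim H^2 \sum_K |e|_{1,N(K)}^2 \lesssim H^2 |e|_1^2$, hence $\|e - I_H e\|_0 \lesssim H |u - u_{H,k}|_1$, which is precisely the third term in~\eqref{L2Estimate}. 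Collecting the three contributions yields~\eqref{L2Estimate}.

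This argument is essentially routine: it uses nothing about the nonlinear problem, only the projection/stability/approximation structure of $I_H$ and the bounded overlap of element patches. The only point requiring mild care is the summation over elements to pass from the local approximation estimate to a global one; the finite-overlap property makes the generic constant independent of $H$ and of the spatial variations of $\alpha$. (Of course, this corollary only becomes a genuine convergence rate once combined with Theorem~\ref{DecayingBound} to bound $|u-u_{H,k}|_1$ and with a separate estimate for $\|u - u_{H,k}\|_0$, e.g.\ via an Aubin--Nitsche duality argument, but those are carried out elsewhere.)
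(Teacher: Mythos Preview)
Your proof is correct and follows essentially the same route as the paper: triangle inequality to split off $\|u-I_Hu\|_0$, then writing $I_H e = e - (e - I_H e)$ for $e=u-u_{H,k}$ and applying the approximation property~\eqref{Interpolation_properties} to bound $\|e-I_He\|_0\lesssim H|e|_1$. The only difference is that you spell out the element-wise summation and finite overlap argument explicitly, whereas the paper applies the global approximation estimate directly.
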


 \begin{proof}
By triangle inequality and the approximation property of $I_H$ we have that
\begin{align*}
    \|u-I_Hu_{H,k}\|_0 &\leq \|u-I_Hu \|_0+\|I_Hu-I_Hu_{H,k}\|_0.
\end{align*}
 Using the stability and approximation of the interpolation $I_H$, and Poincaré inequality, we obtain for the second term that
\begin{align*}
  \|I_Hu-I_Hu_{H,k}\|_0 &\lesssim \|u-u_{H,k}-(u-u_{H,k})+I_Hu-I_Hu_{H,k}\|_0, \\
  &\lesssim \|u-u_{H,k}\|_0+H|u-u_{H,k}|_1. \qedhere
\end{align*}
\end{proof}
The error estimate of Corollary~\ref{Corollary} can be further refined if an  $L^2$ estimate for $\|u-u_{H,k}\|_0$ is available (it should be $O(H^2)$ by Aubin-Nitsche trick). Hence, with the estimate of the first and last term in~\eqref{L2Estimate} and if $k\approx |\operatorname{log}(H)|$, it yields an estimate that is at least of the first order up to the linearization error and even of order $H^2$ if $u$ is sufficiently regular. 
Furthermore, the term $\|u-I_Hu\|_0$ can be estimated against the $L^2$-best-approximation error of $V_{H}$ if we assume $L^2$ stability of $I_H$, i.e., $\|I_Hv\|_0 \lesssim\|v\|_0 \; \forall v_H \in V_H$, the estimate~\eqref{L2Estimate} can be rewritten as
\[\|u-I_Hu_{H,k}\|_0 \lesssim\inf_{v_H \in V_H}\|u-v_H\|_0+\|u-u_{H,k}\|_0.\]
The $L^2$-best-approximation error converges at least $O(H)$ for $u\in H^1_0(\Omega)$, and even better rates are possible if we assume more regularity of $u$. 
 
In the following analysis, we split our explanations into two subsections based on the linearization technique. We first present the analysis for the case of using Kačanov technique and then proceed with the analysis for the case of Fréchet derivative. 

\subsection{Error estimate for Kačanov-type linearization}

We dedicate this section to the error analysis  when using  Kačanov-type linearization technique.
\begin{proof}[Proof of Theorem~\ref{DecayingBound}] 
By triangle inequality, we have for any $v_{H,k}\in V_{H,k}$
\begin{equation}\label{IneqProof}
    |u-u_{H,k}|_1 \leq |u-v_{H,k}|_1+|v_{H,k}-u_{H,k}|_1.
\end{equation}
We now specifically choose $v_{H,k}=(\operatorname{id}-Q^k)I_Hu=(\operatorname{id}-Q)I_Hu+(Q-Q^k)I_Hu$.
First, we bound the first term on the right-hand side of~\eqref{IneqProof} as follows
\begin{equation}\label{Step1}
    |u-v_{H,k}|_1 \leq |u-(\operatorname{id}-Q)I_Hu|_1 +|(Q-Q^k)I_Hu|_1.
\end{equation}
The second term is bounded using Proposition~\ref{my_proposition}, the stability assumptions of the interpolation, and~\eqref{UnqueSolution} to yield
\begin{equation}\label{correction}
    |(Q-Q^k)I_Hu|_1\lesssim {C_\mathrm{ol}^\frac{1}{2} \nu^k}\|f\|_0.
\end{equation}
 
To bound the first term of~\eqref{Step1}, let $\xi=(\operatorname{id}-Q)I_Hu$.  Using~\eqref{WeakForm} for the solution $u$,  the uniform ellipticity of $\alpha(x,v)$ for all $ v \in H^1_0(\Omega)$, the stability of the interpolation, and the orthogonality from~\eqref{LinearOrthogonality}, we obtain 
\begin{align*}
     |u-\xi|_1^2 &\lesssim \int_{\Omega} \alpha(x,p^*)\nabla(u-\xi)\cdot\nabla(u-\xi)dx =\int_{\Omega} (\alpha(x,p^*)\nabla u-\alpha(x,p^*)\nabla \xi)\cdot\nabla(u-\xi)dx  \\
     &= \int_{\Omega} (\mathbf{A}_L(x,p^*,u) -\mathbf{A}_L(x,p^*,\xi ) )\cdot\nabla(u-\xi)dx  \\
    & {=} \int_{\Omega} [\mathbf{A}_L(x,p^*,u)-\alpha(x,u)\nabla u+\alpha(x,u)\nabla u]\cdot\nabla(u-\xi)dx\\
   & \lesssim (H\|f\|_0+\eta(u))|(u-\xi)|_1. 
\end{align*}
In the last inequality, we have used that $u-\xi \in W$. Together with the approximation property of $I_H$, we obtain the following first order of convergence 
\begin{align*}
|(f,u-\xi)_0|&\leq \|f\|_0\|u-\xi \|_0=\|f\|_0\|u-\xi-I_H(u-\xi) \|_0 \\
&\leq H\|f\|_0|u-\xi|_1.
\end{align*}
 To estimate the second term of~\eqref{IneqProof}, observe that since both $u \text{ and } u_{H,k}$ are solutions to~\eqref{WeakForm} and~\eqref{local_probelm } respectively, we have that \[\int (\alpha(x,u)\nabla u-\alpha(x,u_{H,k})\nabla u_{H,k})\nabla \psi_{H,k}=0 \] for any test function $\psi_{H,k} \in V_{H,k}$.
This identity with the ellipticity and boundedness as well as the Lipschitz continuity of $\alpha(x,v)$ for all $v \in H^1_0(\Omega)$ yields
\begin{align*}
    |v_{H,k} -u_{H,k}|_1^2 &\lesssim \int_{\Omega} \alpha(x,p^*)\nabla(v_{H,k}-u_{H,k})\cdot\nabla(v_{H,k} -u_{H,k})dx \\
    &\leq \int(\alpha(x,p^*)\nabla v_{H,k} -\alpha(x,u)\nabla v_{H,k}+\alpha(x,u)\nabla v_{H,k}-\alpha(x,u)\nabla u\\
    &\qquad+\alpha(x,u_{H,k})\nabla u_{H,k} 
    -\alpha(x,p^*)\nabla u_{H,k}) \cdot\nabla(v_{H,k}-u_{H,k}) dx\\
    &\leq (G(u,u_{H,k}, p^*)+ \| \alpha(x,u)\nabla v_{H,k}-\alpha(x,u)\nabla u \|_0) {|v_{H,k}-u_{H,k}|_1}\\
    &\leq ( G(u,u_{H,k}, p^*) +\Lambda_1|u-v_{H,k}|_1 )|v_{H,k}-u_{H,k}|_1.
\end{align*}
Finally, combining the estimates in \eqref{correction},    $|u-v_{H,k}|_1$, and $|v_{H,k}-u_{H,k}|_1$ yields
\begin{align*}
    |u-u_{H,k}|_1 & \lesssim (H+{C_\mathrm{ol}^\frac{1}{2} \nu^k})\|f\|_0 +\eta(u)+ G(u,u_{H,k}, p^*). 
\end{align*}

\end{proof}

\subsection{Error estimate for Fréchet derivative linearization}\label{Section3.2}

We now estimate the error in~\eqref{IneqProof} in the case of linearizing the correction computations with the Fréchet derivative. 

\begin{proof}[Proof of Proposition~\ref{DecayingBound}] 
Similar to the proof for part (a), we have~\eqref{IneqProof} and choose \[v_{H,k}=(\operatorname{id}-Q^k)I_Hu=(\operatorname{id}-Q)I_Hu+(Q-Q^k)I_Hu.\]
First, we bound the first term on the right hand side of the inequality~\eqref{IneqProof} as 
\begin{equation}\label{Step1b}
    |u-v_{H,k}|_1 \leq |u-(\operatorname{id}-Q)I_Hu|_1 +|(Q-Q^k)I_Hu|_1.
\end{equation}
As discussed above, Proposition~\ref{my_proposition} holds for sufficiently fine $H$ so that we bound the second term similar to the Kačanov case.
 
Recall $\xi=(\operatorname{id}-Q)I_Hu$.
To bound the first term of~\eqref{Step1b}, we use the uniform ellipticity of the linearized tensor $\alpha$, the stability of the interpolation, and the orthogonality from~\eqref{LinearOrthogonality} and obtain
\begin{align*}
     |u-\xi|_1^2 &\lesssim \int_{\Omega} (\alpha(x,p^*)\nabla(u-\xi)+\alpha_s(x,p^*)\nabla p^*(u-\xi))\cdot\nabla(u-\xi)dx\\ &\qquad-\int_{\Omega} \alpha_s(x,p^*)\nabla p^*(u-\xi)\cdot\nabla(u-\xi)dx,   \\
     &=\int_{\Omega} (A_L(x,p^*,u) - \alpha(x,u)\nabla u+\alpha(x,u)\nabla u )\cdot\nabla(u-\xi)dx \\&\qquad-\int_{\Omega} \alpha_s(x,p^*)\nabla p^*(u-\xi)\cdot\nabla(u-\xi)dx \\
   & \lesssim H\|f\|_0|u-\xi|_1+\eta(u)|u-\xi|_1 + \|\alpha_s(x,p^*)\nabla p^*\|_\infty \|u-\xi\|_0|u-\xi|_1, \\
   & \lesssim (H\|f\| +\eta(u)  + \|\alpha_s(x,p^*)\nabla p^*\|_\infty \|u-\xi-I_H(u-\xi)\|_0)|u-\xi|_1, \\
   & \lesssim (H\|f\|_0+\eta(u) +C_IC_{p^*}H|u-\xi|_1)|u-\xi|_1.
\end{align*}

As mentioned before, we assume $H$ to be sufficiently fine in the sense of $C_IC_{p^*}H<1$, so we obtain 
\begin{align*}
     |u-\xi| &\lesssim H\|f\|_0+\eta(u). 
\end{align*}
To bound the second term in~\eqref{IneqProof}, we follow the same steps of the proof of the Kačanov case and get 
\begin{align*}
    |v_{H,k}-u_{H,k}|
     &\lesssim     H\|f\|_0 + \eta(u)+ G(u,u_{H,k}, p^*). 
\end{align*}
Combining  the estimates we obtain above with \eqref{correction} yields
\[ |u-u_{H,k}| \lesssim( H+{C_\mathrm{ol}^\frac{1}{2} \nu^k})\|f\|_0+ \eta(u)+G(u,u_{H,k}, p^*) . \]

\end{proof}
\subsection{Linearization error and the choice of linearization point}
 In the LOD   we try to build an approximation space that is problem-adapted. For the nonlinear problem, it is of importance to choose a suitable linearization point, in order to to be able to transfer the idea of the correction computation suggested for the linear problem. Since $u$ is not known a priori,  we try to choose $p^*$ close to the solution so that $V^ms_{H,k}$ has good approximation properties. In the next lemma we discuss possible estimate of the linearization errors of Proposition \ref{my_proposition}, specifically $\eta(u) \text{ and } G(u,u_{H,k},p^*).$

 \begin{lemma}
     Given the linearization formulas in Section \ref{linearizaion} of both Kačanov-type  and Fréchet derivative linearizations, then we have the following upper bounds
     \begin{enumerate}[label=(\alph*)]
    \item Kačanov-type linearization
    \[\eta(u) \lesssim \| p^* - u\|_\infty\|\nabla u\|_0.\]
    
    \item Fréchet derivative linearization
    \[\eta(u)\leq \|(\alpha(x,q^*)\nabla q^*)_{ss}\|_\infty\frac{\|u-p^*\|^2_0}{2}.\]
where $q^* \in H^1_0(\Omega)$ is an intermediate function, i.e., $q^*=p^*+t(u-p^*)$, for some $t \in [0,1]$.

\end{enumerate} 
In addition,  we have the following estimate for $G(u,u_{H,k},p^*)$ such that
\begin{align*}
    G(u,u_{H,k}, p^*)& \lesssim\  {{\|u-p^*\|_\infty}}\|\nabla u\|_0 + \|\nabla u_{H,k}\|_0{\|u_{H,k}-p^*\|_\infty}. 
    \end{align*}
 \end{lemma}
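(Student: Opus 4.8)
The plan is to estimate each linearization error term directly from its definition, exploiting the explicit structure of the two linearizations and the elementary calculus estimates (mean value theorem / Taylor's theorem) for the second argument of $\alpha$.

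\medskip

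\textbf{Kačanov case.} By definition, $\eta(u) = \sup_{|w|_1 = 1}\bigl|\int_\Omega [\alpha(x,u)\nabla u - \alpha(x,p^*)\nabla u]\cdot \nabla w\,dx\bigr|$ since $\mathbf{A}_L(x,p^*,\nabla u) = \alpha(x,p^*)\nabla u$. I would pull out the common factor $\nabla u$, apply Cauchy--Schwarz to separate $\nabla w$, use $|w|_1 = 1$, and then bound the integrand pointwise: $|(\alpha(x,u) - \alpha(x,p^*))\nabla u| \le |\alpha(x,u) - \alpha(x,p^*)|\,|\nabla u|$. The Lipschitz bound from Assumption~\ref{assum_1} (or rather, since we only want the $\|\cdot\|_\infty$ dependence on $u - p^*$, the Lipschitz continuity in $s$) gives $|\alpha(x,u(x)) - \alpha(x,p^*(x))| \le \Lambda_0 |u(x) - p^*(x)| \le \Lambda_0 \|u - p^*\|_\infty$. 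Factoring this out of the integral leaves $\|\nabla u\|_0 \cdot \|\nabla w\|_0 \le \|\nabla u\|_0$, giving $\eta(u) \lesssim \|p^* - u\|_\infty \|\nabla u\|_0$ with the generic constant absorbing $\Lambda_0$.

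\medskip

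\textbf{Fréchet case.} Here $\mathbf{A}_L(x,p^*,\nabla u) = \alpha(x,p^*)\nabla u + (u - p^*)\alpha_s(x,p^*)\nabla p^*$, which is precisely the first-order Taylor expansion of the map $s \mapsto \alpha(x,s)\nabla s$ around $s = p^*$, evaluated at $s = u$ with increment $(u - p^*)$ — once one writes $\nabla u$ formally as $\nabla p^* + \nabla(u-p^*)$ and keeps only the affine-in-$(u-p^*)$ terms, noting the constant $\alpha(x,p^*)\nabla p^*$ was deliberately dropped. Therefore $\alpha(x,u)\nabla u - \mathbf{A}_L(x,p^*,\nabla u)$ is exactly the second-order Taylor remainder, which by the integral (Lagrange) form equals $\tfrac{1}{2}(u - p^*)^2 \,(\alpha(x,q^*)\nabla q^*)_{ss}$ for an intermediate point $q^* = p^* + t(u - p^*)$. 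I would insert this into the definition of $\eta(u)$, apply Cauchy--Schwarz to split off $\nabla w$ (with $|w|_1 = 1$), and then bound $\|(\alpha(x,q^*)\nabla q^*)_{ss}\|_\infty$ out of the integral, leaving $\tfrac{1}{2}\|u - p^*\|_0^2$. The twice-differentiability hypothesis in Assumption~\ref{assum_1} is exactly what licenses the second-order Taylor remainder, so I should flag that. One subtlety: $q^*$ as written should be read pointwise (the intermediate value depends on $x$), and one must check $q^* \in H^1_0(\Omega)$ — this is the convex combination of $H^1_0$ functions only if $t$ is constant in $x$, which it generally is not; I would either state $q^*$ pointwise as a measurable selection or simply absorb this into "intermediate function" as the paper does.

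\medskip

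\textbf{The term $G$.} From its definition, $G(u,u_{H,k},p^*) = \|(\alpha(x,p^*) - \alpha(x,u))\nabla v_{H,k} + (\alpha(x,u_{H,k}) - \alpha(x,p^*))\nabla u_{H,k}\|_0$. I would apply the triangle inequality in $L^2$ to split the two summands, then in each factor out the matrix difference in $L^\infty$: $\|(\alpha(x,p^*) - \alpha(x,u))\nabla v_{H,k}\|_0 \le \|\alpha(\cdot,p^*) - \alpha(\cdot,u)\|_\infty \|\nabla v_{H,k}\|_0 \lesssim \|u - p^*\|_\infty \|\nabla v_{H,k}\|_0$ via Lipschitz continuity, and similarly $\lesssim \|u_{H,k} - p^*\|_\infty \|\nabla u_{H,k}\|_0$ for the second. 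The only cosmetic gap versus the claimed statement is that the first term carries $\|\nabla v_{H,k}\|_0$ rather than $\|\nabla u\|_0$; I would close this either by noting $\|\nabla v_{H,k}\|_0 = |(\operatorname{id}-Q^k)I_H u|_1 \lesssim |I_H u|_1 \lesssim |u|_1 = \|\nabla u\|_0$ using the stability of $Q^k$ and of $I_H$ (cf.~\eqref{Interpolation_properties}), or simply restate the bound with $v_{H,k}$.

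\medskip

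\textbf{Main obstacle.} The routine Cauchy--Schwarz / Lipschitz estimates are straightforward; the genuinely delicate point is the Fréchet part — correctly identifying $\mathbf{A}_L$ as the first-order Taylor polynomial of $s \mapsto \alpha(x,s)\nabla s$ (up to the dropped constant, which matters: the remainder identity only holds because of how $\mathbf{A}_L$ was defined) and then justifying the pointwise-in-$x$ second-order remainder with the measurable intermediate point $q^*$, together with making sure the regularity needed ($\alpha \in C^2$ in $s$, plus enough to make $(\alpha(x,q^*)\nabla q^*)_{ss}$ an $L^\infty$ object — which implicitly requires $\nabla p^*$ and $\nabla q^*$ to be controlled in $L^\infty$, a hidden assumption on the smoothness of $p^*$) is available. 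I would be careful to state these hypotheses explicitly rather than hide them in $\lesssim$.
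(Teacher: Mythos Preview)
Your proposal is correct and follows essentially the same route as the paper: Cauchy--Schwarz plus Lipschitz continuity for the Ka\v{c}anov bound and the $G$ estimate (including the same stability argument $\|\nabla v_{H,k}\|_0\lesssim\|\nabla u\|_0$ via $Q^k$ and $I_H$), and the second-order Taylor remainder for the Fr\'echet bound. Your added remarks on the pointwise nature of $q^*$ and the implicit $L^\infty$ regularity needed for $(\alpha(x,q^*)\nabla q^*)_{ss}$ go beyond what the paper spells out, but the underlying argument is the same.
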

  \begin{proof} By Lipschitz continuity of $\alpha$, we obtain the following linearization error bounds.\\
\textbf{Kačanov-type linearization error: }
  \begin{align*}
    \eta(u)&\leq\|\alpha(x,p^*)\nabla u -\alpha(x,u)\nabla u\|_0\\
    &\leq\|\alpha(x,p^*)-\alpha(x,u)\|_\infty\|\nabla u\|_0\\
    & \lesssim \| p^* - u\|_\infty\|\nabla u\|_0.
\end{align*}
 \textbf{Fréchet derivative linearization error: } Using the Taylor series expansion on $\alpha(x,u)\nabla u$, we obtain 
 \begin{flalign*}
\eta(u)&\leq \|\alpha(x,p^*)\nabla p^*+\alpha(x,p^*)\nabla (u-p^*) + (u-p^*)\alpha_s(x, p^*)\nabla p^*-\alpha(x,u)\nabla u\|_0 &&\\
    &\leq \|\alpha(x,p^*)\nabla p^*+\mathcal{F}(p^*)(u-p^*)-\alpha(x,p^*)\nabla p^*-\mathcal{F}(p^*)(u-p^*)-\frac{(\alpha(x,q^*)\nabla q^*)_{ss}}{2}\\\
    &\qquad \cdot(u-p^*)^2 \|_0 &&\\ 
    &\leq \|(\alpha(x,q^*)\nabla q^*)_{ss}\|_\infty\frac{\|u-p^*\|^2_0}{2}.
\end{flalign*}
Now we derive an upper bound for $G(u,u_{H,k}, p^*)$. Given the definition of  $  G(u,u_{H,k}, p^*)$ and Lipschitz continuity, we obtain
\begin{align*}
    G(u,u_{H,k}, p^*) &=\|(\alpha(x,p^*)\nabla v_{H,k} -\alpha(x,u)\nabla v_{H,k}+\alpha(x,u_{H,k})\nabla u_{H,k} -\alpha(x,p^*)\nabla u_{H,k})\|_0,\\
    & \leq \|\alpha(x,p^*-\alpha(x,u))\|_\infty \|\nabla v_{H,k}\|_0 +\|\alpha(x,p^*)-\alpha(x,u_{H,k})\|_\infty \|\nabla u_{H,k}\|_0\\
    &\leq (\Lambda_0 \|\nabla v_{H,k}\|_0\|u-p^*\|_\infty+\Lambda_0\|\nabla u_{H,k}\|_0\|u_{H,k}-p^*\|_\infty).
\end{align*}
 To estimate  $\|\nabla v_{H,k}\|_0$,  we use the definition of $v_{H,}$ and the stability of $Q^k$ and $I_H$ to obtain
 \[\|\nabla v_{H,k} \|_0=\|\nabla (\operatorname{id}-Q^k)I_Hu\|_0\lesssim  \|\nabla u\|_0. \qedhere\]
  \end{proof}

   \begin{remark} We discuss some implications of the above error estimates.
  \begin{itemize}
      
      \item We point out that the generic constants in the analysis above are independent of the mesh size and the spatial variations of $\alpha$ and mainly depend on the constants in the Assumptions~\ref{assum_1} and also on the constant ${C_\mathrm{ol}}$.
      \item From a theoretical point of view, no significant difference is expected between both linearization approaches, provided that the linearization point is chosen carefully to be close to both solutions. However, we observe a slight yet important difference. In the case of Fréchet derivative linearization, it is crucial to choose the linearization point carefully in such a way the conditions $\|\alpha_s(x,p^*)\nabla p^*\|_\infty \leq C_{p^*}$ and $C_IC_{p^*}H<1$ are satisfied, as seen in the numerical examples. 
  \end{itemize}
\end{remark}

From a theoretical  as well as  practical point of view, the choice of the linearization point poses a challenge as it requires $p^*$ to be close to $u \text{ and } u_{H,k}$  despite neither $u $ nor $u_{H,k}$ being known a priori. Further,  the upper bound of the error estimates above seems to be suboptimal, e.g., choosing suitable norms when applying Hölder's inequality to estimate $G$.
\begin{itemize}
        \item Note that the upper bound in the error analysis depends on the infinity norm of both errors, namely $\|u-p^*\|_\infty$ and $\|u_{H,k}-p^*\|_\infty$. This suggests that the linearization point $p^*$ should be carefully selected close to $u$ and $u_{H,k}$ at the same time to ensure that $G(u,u_{H,k}, p^*)$ and $\eta(u)$ are sufficiently small. Then, we obtain the first order of convergence w.r.t. $H$, if the size of the patch satisfies $k \approx \operatorname{log}(H)$.
      \item To specify a good selection for the linearization points, observe that the term $\|\nabla u_{H,k}\|_0\|u_{H,k}-p^*\|_\infty$ can be evaluated a posteriori. We therefore mainly focus on suitable choices of $p^*$ that can make the value of $\|u-p^*\|_\infty$ very small. 
    \item  In order to appropriately choose the linearization point close to $u$, one might try to solve the problem using standard finite element method over coarse mesh.  Although $u_H$  does not provide an accurate  approximation, it can still reveal important qualitative and quantitative features of the weak solution. Through this numerical solution, one aim to gain insight into certain characteristics of $u$ such as its general behavior in which we obtain a valuable guidance for making an effective choice of $p^*$.
    \item A cascade of LOD solution might be a good choice to overcome the challenge of suitable linearization point selection as we also observed in the numerical experiments.  The point $p^*$ is updated iteratively such that  $\operatorname{ulod}$ solution is used in another iteration to compute a new multiscale space. However, this is not reflected in the current estimate as it would only accumulate the errors, namely all linearization errors arising initially from the choice of $p^*$ will add up. 
\end{itemize}

 \section*{Conclusion}
We presented and analyzed a numerical homogenization method for a class of nonmonotone quasilinear problems with rapidly varying coefficient. Local correction problems are linearized by two linearization techniques in order to solve linear problems and construct a problem-adapted basis of low-dimension multiscale space.  Numerical experiments illustrate the impact of the choice of the linearization points on the performance of the method. We proved   error estimates for both linearization techniques. Some linearization points perform better than others, depending on how close they are to the analytical solution. The two linearization techniques especially differ when the linearization point is far from the solution. For the latter situation, an iteration of our multiscale method by updating the linearization point improves the performance considerably. This indicates that iterative multiscale methods in the spirit of~\cite{MAir&barbara,IterativeLOD} may also be promising for nonmonotone quasilinear problems, which is left for future research.

\section*{Acknowledgments}
This work is funded by the Deutsche Forschungsgemeinschaft (DFG, German Research Foundation) under project number 496556642. BV also acknowledges support from the Deutsche Forschungsgemeinschaft (DFG, German Research Foundation) under Germany's Excellence Strategy – EXC-2047/1 – 390685813.

\bibliography{reference}
\bibliographystyle{plain}  
\end{document}